\theoremstyle{definition}
\newtheorem{theorem}{Theorem}[section]
\newtheorem{theoremx}{Theorem}
\newtheorem*{theorem*}{Theorem}
\numberwithin{equation}{section}
\newtheorem{question}[theorem]{Question}
\newtheorem{corollary}[theorem]{Corollary}
\newtheorem{lemma}[theorem]{Lemma}
\newtheorem{proposition}[theorem]{Proposition}
\newcommand{\tr}{\operatorname{tr}}
\theoremstyle{definition}
\newtheorem{definition}[theorem]{Definition}
\newtheorem{example}[theorem]{Example}
\newtheorem*{conjecture*}{Berger's Conjecture}
\newtheorem{remark}[theorem]{Remark}
\newtheoremstyle{TheoremNum}
{8pt}{8pt}              
{\upshape}                      
{}                              
{\bfseries}                     
{.}                             
{.5em}                             
{\theoremname{#1}\theoremnote{ \bfseries #3}}
\theoremstyle{TheoremNum}
\newcommand{\m}{\mathfrak{m}}
\newcommand{\n}{\mathfrak{n}}
\renewcommand{\(}{\left(}
\renewcommand{\)}{\right)}
\newcommand{\NN}{\mathbb{N}}
\newcommand{\cC}{\mathfrak{C}}
\newcommand{\Rank}{\operatorname{rank}}
\newcommand{\Hom}{\operatorname{Hom}}
\newcommand{\Ext}{\operatorname{Ext}}
\newcommand{\Tor}{\operatorname{Tor}}
\newcommand{\depth}{\operatorname{depth}}
\newcommand{\h}{\operatorname{h}}
\renewcommand{\leq}{\leqslant}
\renewcommand{\geq}{\geqslant}
\newcommand{\ds}{\displaystyle}
\DeclareMathOperator{\im}{im}
\newcommand{\tens}{\otimes}
\title{Partial Trace Ideals, Torsion and Canonical Module}
\author{Sarasij Maitra}
\date{}
\email{sm3vg@virginia.edu}
\address{University of Virginia, Charlottesville, VA}
\begin{document}
	\begin{abstract}
		For any finitely generated module $M$ with non-zero rank over a commutative one dimensional Noetherian local domain, the numerical invariant $\h(M)$ was introduced and studied in \cite{maitra2020partial}. We establish a bound on it which helps capture information about the torsion submodule of $M$ when $M$ has rank one and generalizes the discussion in \cite{maitra2020partial}. 
		We further study bounds and properties of $\h(M)$ in the case when $M$ is the canonical module $\omega_R$. This in turn helps in answering a question of S. Greco and then provide some classifications. Most of the results in this article are based on the results presented in the author's doctoral dissertation \cite{phdthesis}.
	\end{abstract}

\maketitle

\section*{Introduction}\label{torsioninterpret1}

Let $(R,\m,k)$ be a Noetherian local domain and let $M$ be a finitely generated $R$-module. The study of vanishing of the torsion submodule of $M$ has been of high interest among algebraists. There exist quite a few open questions along these lines even when $R$ has dimension one, e.g. the Huneke-Weigand Conjecture \cite{huneke1994tensor}, the Berger Conjecture \cite{MR152546}, etc. 

Denote the torsion submodule of $M$ by $\tau(M)$. If $Q$ denotes the fraction field of $R$, then $$\tau(M):=\ker(M\to M\otimes_R Q).
$$
In order to have discussions regarding $\tau(M)$, a natural question to ask is how do we interpret the torsion beyond the above definition. If $M$ is of the form $M_1\tens_RM_2$, then detailed discussions exist in the literature when the problem of vanishing of torsion is interpreted in terms of \textit{rigidity} of $\Tor$ and $\Ext$ modules (see for example, \cite{auslander}, \cite{lichtenbaum1966vanishing}, \cite{murthy63}, \cite{huneke1994tensor}, \cite{constapel1996vanishing}, \cite{huneke2001vanishing}, \cite{celikbas2011vanishing}, \cite{Celikbas2015VanishingOT}, \cite{huneke2019rigid}, etc.). 
In this article, we interpret the torsion of a rank one module over a one-dimensional local domain, in terms of maps $f:M\to R$. Recall that $\Rank_R(M):=\dim_Q(M\otimes_R Q)$. 

More precisely, suppose $M$ has rank one, then we get the following interpretation of $\tau(M)$. 
Let $J$ be an ideal to which $M$ surjects. Let $f$ denote the surjection. Since both $J$ and $M$ have rank one, we get the following exact sequence:
$$0\to \tau(M)\to M\to J\to 0.$$ Thus, $J\cong \frac{M}{\tau(M)}$ for any ideal $J$ which appears as the image of an element in $\Hom_R(M,R)$. (see \Cref{torsioninterpret} for the proof of the above statement). Thus studying such ideals $J$ is a natural approach to capture some information about $\tau(M)$. This precisely motivated the definition \cite[Definition 2.1]{maitra2020partial} in the case $R$ is a one-dimensional local domain. We recall the definition here.
$$\h(M):=\min\{\lambda(R/J)\mid M\to J\to 0,J\subseteq R\}$$ where $\lambda(\cdot)$ denotes length. We call any such $J$ that achieves the above minimum as a \textit{partial trace ideal of $M$}, or equivalently say that \textit{$J$ realizes $M$}. Recall that the trace ideal of $M$ is defined to be $\tr_R(M):=\sum_{f:M\to R}f(M)$, and hence any such $J$ as above is indeed a partial trace ideal. By a partial trace ideal $I$, we mean that $I$ is a partial trace of itself.

  We should mention here that the above definition is closely related to \cite[Definition 1.1]{greco1984postulation} in S. Greco's work which we found after the work in \cite{maitra2020partial} was published. We also point out that the definition in \cite{greco1984postulation} is restricted only to ideals in a one dimensional local domain, whereas we study it for any finitely generated module. We do recover certain results which were proved in S. Greco's article, but his work was not aimed at dealing with $\tau(M)$.

  The case when $R$ is a one dimensional regular local ring is quite well-known. Since a one dimensional regular local ring is a principal ideal domain, a finitely generated module is torsion free if and only if it is free. So, we only focus on non-regular rings. Moreover, if $M$ is cyclic of rank one, then again $M\cong R$, hence $\tau(M)=0$. This brings us to the first main result of this article (see \Cref{MainThm}). Here $\mu(\cdot)$ denotes the minimal number of generators. 

\begin{theoremx} Let $(S,\n,k)$ be a regular local ring and $\ds R={S}/{I}$ be a non-regular one dimensional domain with maximal ideal $\m$ such that $\mu(\m)=n$. Let $M$ be a rank one $R$-module with $\mu(M)\geq 2$. Assume that $I\subseteq \n^{s+1}$ for some $s\geq 1$ and $J\subseteq \m^s$ for some partial trace ideal of $M$. 
	Then $\tau(M)\neq 0$ if \[ \h(M)<\frac{-\dim_k\Tor^R_1(M,k)+\mu(I)}{n}+{n+s\choose s}\frac{s}{s+1}.\]
\end{theoremx}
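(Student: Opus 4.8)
The plan is to derive the claimed sufficient condition for $\tau(M)\neq 0$ by contraposition: assume $\tau(M)=0$, so that $M$ is torsion-free of rank one, hence isomorphic to an ideal of $R$, and moreover $M$ is then isomorphic to any partial trace ideal $J$ realizing it (by the exact sequence $0\to\tau(M)\to M\to J\to 0$ from \Cref{torsioninterpret}, which becomes an isomorphism when $\tau(M)=0$). Under this assumption we will produce a \emph{lower} bound on $\h(M)=\lambda(R/J)$ that contradicts the displayed inequality. The point is that $J\subseteq\m^s$ and $J\cong M$, so we can compare the minimal free resolutions of $M$ (equivalently $J$) over $R$ with the colength of $J$.

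The key steps, in order, are as follows. First, reduce everything to $S$: lift $J\subseteq\m^s\subseteq R=S/I$ to an ideal $\widetilde J\subseteq\n^s\subseteq S$ with $\widetilde J\supseteq I$, so that $\lambda(R/J)=\lambda(S/\widetilde J)$, and note $I\subseteq\n^{s+1}$. Second, estimate $\lambda(S/\widetilde J)$ from below using the number of generators of $\widetilde J$ that are ``needed'' beyond those coming from $I$: since $\mu_R(M)=\mu_R(J)=\dim_k\Tor_0^R(M,k)$ and $\dim_k\Tor_1^R(M,k)$ controls the first syzygies, one extracts how many of the $\mu(I)$ generators of $I$ become redundant in $\widetilde J$, giving the combination $-\dim_k\Tor_1^R(M,k)+\mu(I)$ and dividing by $n=\mu(\m)$ accounts for passing from generators to a colength contribution in the regular local ring $S$ of embedding dimension $n$. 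Third — and this is where the binomial term enters — use that $\widetilde J\subseteq\n^s$ forces $\lambda(S/\widetilde J)\geq\lambda(S/\n^s)$ only partially; more precisely, a counting of monomials of degree $<s$ in $n$ variables that survive in $S/\widetilde J$, weighted by the $\tfrac{s}{s+1}$ factor reflecting that $I\subseteq\n^{s+1}$ (so the relations do not kill degree-$\leq s$ content too cheaply), yields the term $\binom{n+s}{s}\tfrac{s}{s+1}$. Assembling these lower bounds gives $\h(M)\geq \dfrac{-\dim_k\Tor_1^R(M,k)+\mu(I)}{n}+\binom{n+s}{s}\dfrac{s}{s+1}$, contradicting the hypothesis; hence $\tau(M)\neq 0$.

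I expect the main obstacle to be making the second and third steps genuinely compatible: the Tor-based generator count and the $\n^s$-containment colength bound both involve the same generators of $\widetilde J$, so naively adding them could double-count. The fix will be to organize the generators of $\widetilde J$ into two groups — those accounting for the ``defect'' $\mu(I)-\dim_k\Tor_1^R(M,k)$ in one Hilbert-function estimate and the remaining ones feeding the $\binom{n+s}{s}$ count — and to verify that the inequality $I\subseteq\n^{s+1}$ (strictly deeper than $\n^s$) is exactly what prevents overlap, because it guarantees the relations in $I$ contribute to colength only in degrees $\geq s+1$. A secondary technical point is checking that the rank-one and $\mu(M)\geq 2$ hypotheses are used correctly: rank one gives $J\cong M$ under $\tau(M)=0$, and $\mu(M)\geq 2$ ensures $J\neq R$ so the colength is positive and the resolution is nontrivial, which is what allows the Tor$_1$ term to be meaningful rather than vacuous.
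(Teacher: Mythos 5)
Your contrapositive setup is the right one and matches the paper: assume $\tau(M)=0$, deduce $J\cong M$ from the sequence $0\to\tau(M)\to M\to J\to 0$, and aim for a lower bound on $\lambda(R/J)$ contradicting the hypothesis. But beyond that point the proposal does not contain a proof. The second and third ``steps'' are descriptions of where the terms $\frac{-\dim_k\Tor^R_1(M,k)+\mu(I)}{n}$ and $\binom{n+s}{s}\frac{s}{s+1}$ ought to come from, not derivations: there is no argument for why ``dividing by $n$ accounts for passing from generators to a colength contribution,'' and the claim that a monomial count ``weighted by the $\tfrac{s}{s+1}$ factor'' produces $\binom{n+s}{s}\tfrac{s}{s+1}$ is asserted, not established. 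You yourself flag the double-counting problem between the two estimates and defer its resolution; that unresolved compatibility issue \emph{is} the content of the theorem, so the proof is essentially missing. Moreover, lifting $J$ to $\widetilde J\subseteq\n^s$ in $S$ plays no role in any workable version of the argument I can see, and nothing in your sketch uses it.

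The two ingredients that actually close the gap (and which the paper uses) are: (i) the exact formula $\dim_k\Tor^R_1(\m^s,k)=s\binom{n+s-1}{s+1}+\mu(I)$ of \Cref{lem1}, which is the sole source of the $\mu(I)$ term; and (ii) the bound $\dim_k\Tor^R_1(N,k)\le\lambda(N)\dim_k\Tor^R_1(k,k)=\lambda(N)\mu(\m)$ for a finite-length module $N$ (\Cref{lengthlemma}), applied to $N=\m^s/J$, which is what converts Tor data into colength. One then tensors $0\to J\to\m^s\to\m^s/J\to 0$ with $k$, plays the resulting lower bound for $\dim_k\Tor^R_1(\m^s/J,k)$ against the upper bound $\mu(\m)\bigl(\lambda(R/J)-1-\sum_{i=1}^{s-1}\mu(\m^i)\bigr)$ from (ii), substitutes $\Tor^R_1(J,k)\cong\Tor^R_1(M,k)$ and $\mu(J)=\mu(M)$ using $J\cong M$, and uses $\mu(\m^s/J)\ge\mu(\m^s)-\mu(J)$ to discard the leftover terms; the binomial identity $\frac{s}{n}\binom{n+s-1}{s+1}+\binom{n+s-1}{s-1}=\binom{n+s}{s}\frac{s}{s+1}$ then produces the stated constant. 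Without (i) and (ii), or substitutes for them, your outline cannot be completed as written.
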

\noindent Note that \cite[Theorem A]{maitra2020partial} is an immediate application of the above theorem. 

We study this invariant on ideals using connections with the $I$-Ulrich modules as defined in \cite{dao2021reflexive}. We also study some general bounds on $\h(M)$ for any finitely generated $R$-module $M$ (not necessarily rank one). The key tool is \cite[Theorem 2.10]{maitra2020partial}. This also in turn leads us to a question that was asked informally by S. Greco in a conversation with C. Huneke. S. Greco had asked if $$\min\{\lambda(R/J)\mid J\cong \omega_R \}\leq \lambda({R}/{\cC})+e(R)-\lambda(\overline{R}/R)$$ where $\cC$ denotes the conductor ideal, $e(R)$ is the multiplicity of $R$, $\overline{R}$ is the integral closure of $R$ in $Q$ and $\omega_R$ is the canonical module which can be identified with an ideal of $R$. Note that this question is specifically asking whether $\h(\omega_R)$ satisfies the bound. This leads us to the second main result in this article where we show that the answer is negative (see \Cref{Greco}). Recall that analytically unramified means that the $\m$-adic completion $\widehat{R}$ is reduced. In this case, a canonical module exists \cite[Corollary 1.7]{MR1296598}. 

\begin{theoremx}\label{thmB}
	Let $(R,\m,k)$ be a one dimensional analytically unramified local domain with infinite residue field $k$. Let  the canonical module be $\omega_R$. Then the following statements hold.
	\begin{enumerate}
		\item $\h(\omega_R)=0$ if and only if $R$ is Gorenstein. 
		\item If $R$ is not Gorenstein, then $$e(J;R)-\mu(\omega_R)+1\geq \h(\omega_R)\geq \lambda(R/\cC)+e(R)-\lambda(\overline{R}/R)$$ where $J$ is any ideal that realizes the canonical module.

	\end{enumerate}
\end{theoremx}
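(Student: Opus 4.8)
The plan is to dispose of (1) by a rank argument, to obtain the upper bound in (2) from a colength estimate for $\m$-primary ideals, and to reduce the lower bound in (2) — the substantive assertion — to a single inequality about the conductor, which will be the main obstacle. Since $\widehat{R}$ is reduced I may take $\omega_R$ to be a fractional ideal of $R$, so it is torsion-free of rank one with $\mu(\omega_R)=\type(R)$. If $\omega_R\twoheadrightarrow J$ is a surjection onto a nonzero ideal $J\subseteq R$, its kernel has rank $0$, hence is torsion, hence is $0$; so $J\cong\omega_R$, and consequently $\h(\omega_R)=\min\{\lambda(R/J):J\subseteq R,\ J\cong\omega_R\}$. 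For (1): if $\h(\omega_R)=0$ then $R$ itself realizes $\omega_R$, so $\omega_R\cong R$ and $R$ is Gorenstein; conversely if $R$ is Gorenstein then $\omega_R\cong R$ is realized by $J=R$, so $\h(\omega_R)=0$.

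For the upper bound in (2): let $J\subseteq R$ with $J\cong\omega_R$; then $J$ is $\m$-primary. Because $k$ is infinite, a general element $x\in J$ is at once a minimal generator of $J$ and a minimal reduction of $J$; then $xR\subseteq J\subseteq R$, $e(J;R)=e(xR;R)=\lambda(R/xR)$ (as $R$ is Cohen--Macaulay), and $J/xR$ is minimally generated by $\mu(J)-1$ elements, so $\lambda(J/xR)\ge\mu(J)-1$. Hence $\lambda(R/J)=\lambda(R/xR)-\lambda(J/xR)\le e(J;R)-\mu(J)+1=e(J;R)-\mu(\omega_R)+1$, and since $\h(\omega_R)\le\lambda(R/J)$ by definition, this is the asserted bound for every ideal $J$ realizing $\omega_R$.

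For the lower bound in (2), I would first fix a convenient fractional representative of $\omega_R$. Applying $\Hom_R(-,\omega_R)$ to $0\to R\to\overline{R}\to\overline{R}/R\to0$ and using that $\overline{R}$ is module-finite and regular — so $\Hom_R(\overline{R},\omega_R)\cong\omega_{\overline{R}}\cong\overline{R}$ and $\Ext^1_R(\overline{R},\omega_R)=0$ since $\overline{R}$ is a maximal Cohen--Macaulay $R$-module — together with local duality $\Ext^1_R(\overline{R}/R,\omega_R)\cong(\overline{R}/R)^{\vee}$, produces an exact sequence $0\to\overline{R}\to\omega_R\to(\overline{R}/R)^{\vee}\to0$; after rescaling I get a fractional ideal $\omega\cong\omega_R$ with $\overline{R}\subseteq\omega$ and $\lambda(\omega/\overline{R})=\delta:=\lambda(\overline{R}/R)$. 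Every ideal $J\subseteq R$ with $J\cong\omega_R$ is then $J=g\omega$ for some $g\in Q^{\times}$, and $g\omega\subseteq R$ iff $g\in\omega^{-1}:=(R:_Q\omega)$; from $\overline{R}\subseteq\omega$ we get $\omega^{-1}\subseteq(R:_Q\overline{R})=\cC$, so $g\in\cC$ and $g\overline{R}\subseteq\cC\subseteq R$. Since $g\overline{R}\subseteq g\omega\subseteq R$ and multiplication by $g$ identifies $\omega/\overline{R}$ with $g\omega/g\overline{R}$, one obtains $\lambda(R/J)=\lambda(R/g\overline{R})-\delta=\lambda(R/\cC)+\lambda(\cC/g\overline{R})-\delta$. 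Minimizing over $g$, the inequality $\h(\omega_R)\ge\lambda(R/\cC)+e(R)-\delta$ is therefore \emph{equivalent} to
\[
\lambda(\cC/g\overline{R})\ge e(R)\qquad\text{for every }g\in\omega^{-1}\setminus\{0\},
\]
and since $g\overline{R}\subseteq\omega^{-1}\overline{R}\subseteq\cC$ it is enough to prove $\lambda(\cC/\omega^{-1}\overline{R})\ge e(R)$, both sides being lengths of principal fractional ideals over the Dedekind ring $\overline{R}$.

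This last inequality is the step I expect to be the main obstacle, and it is here that $R$ being non-Gorenstein — so that $\tr_R(\omega_R)\subsetneq R$ is $\m$-primary — enters. The plan is to derive it from the general lower bounds for $\h$ resting on \cite[Theorem 2.10]{maitra2020partial}: writing $\cC=d\overline{R}$ for a generator of the conductor as an $\overline{R}$-ideal, one expresses $\omega^{-1}\overline{R}$ through the canonical trace $\tr_R(\omega_R)=\omega\,\omega^{-1}$ via the identity $\tr_R(\omega_R)\,\overline{R}=(\omega\overline{R})(\omega^{-1}\overline{R})$, and then compares the colengths over $\overline{R}$ of these principal $\overline{R}$-ideals with $e(R)=\lambda(R/xR)$ for a minimal reduction $x\in\cC$ of $\m$, chosen generically (using that $k$ is infinite and $\overline{R}$ is reduced but possibly non-local). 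A more hands-on alternative, after reducing to the analytically irreducible case, is to translate the inequality into a gap-counting estimate relating the value semigroup $v(R)$, the conductor, and the canonical semigroup ideal $v(\omega)$. Granting it, combining with the displayed formula gives $\h(\omega_R)=\lambda(R/\cC)+\min_g\lambda(\cC/g\overline{R})-\delta\ge\lambda(R/\cC)+e(R)-\lambda(\overline{R}/R)$, which is (2).
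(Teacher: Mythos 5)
Parts (1) and the upper bound in (2) are fine. Your upper-bound argument (a general element $x$ of a realizing ideal $J$ is simultaneously a minimal generator and a minimal reduction, whence $\lambda(R/J)=\lambda(R/xR)-\lambda(J/xR)\le e(J;R)-\mu(J)+1$) is in fact more elementary than the paper's, which instead establishes the exact identity $\h(\omega_R)=\lambda(R/\cC)+e(J;R)-\lambda(\overline{R}/R)$ and then quotes the inequality $\lambda(R/\cC)-\lambda(\overline{R}/R)\le 1-\mu(\omega_R)$ from Swanson--Huneke.

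The lower bound is where the theorem's content lies, and there your proof is not complete. The bookkeeping is correct: normalizing a copy $\omega$ of the canonical module so that $\overline{R}\subseteq\omega$ with $\lambda(\omega/\overline{R})=\lambda(\overline{R}/R)$, writing every realizing ideal as $g\omega$, and arriving at $\lambda(R/g\omega)=\lambda(R/\cC)+\lambda(\cC/g\overline{R})-\lambda(\overline{R}/R)$. But this only transfers the entire difficulty into the inequality $\lambda(\cC/g\overline{R})\ge e(R)$, which you explicitly leave as ``the main obstacle'' with two undeveloped plans (a trace-ideal manipulation and a gap-counting argument), neither of which is carried out. Comparing your displayed formula with the one in the paper shows that $\lambda(\cC/g\overline{R})=e(z;R)=e(g\omega;R)$ for a minimal reduction $z$ of $g\omega$, so your target inequality is precisely the assertion $e(J;R)\ge e(R)$ for a realizing ideal $J$ --- i.e.\ nothing has been reduced; the substantive step still has to be proved. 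The paper closes this efficiently: since $\cC$ is $\omega_R$-Ulrich, $\cC J=z\cC$ for a minimal reduction $z$ of $J$; the lemma $\lambda(J/\cC J)=\lambda(\overline{R}/R)$ and the fact that $\cC$ is maximal Cohen--Macaulay of rank one give $\lambda(R/J)=\lambda(R/\cC)+e(z;R)-\lambda(\overline{R}/R)$, and then $e(z;R)=e(J;R)\ge e(\m;R)=e(R)$ because $J\subseteq\m$ ($R$ being non-Gorenstein). Until you supply a complete proof of $\lambda(\cC/g\overline{R})\ge e(R)$ --- for instance by identifying $\lambda(\cC/g\overline{R})$ with $e(g\omega;R)$ as above and using that multiplicity is inclusion-reversing --- the lower bound, and hence the answer to Greco's question, is not established. (A minor further remark: the paper's argument does not require $\overline{R}$ to be a DVR, whereas your normalization $\cC=d\overline{R}$ quietly uses that $\overline{R}$ is a semilocal Dedekind domain, hence a PID; that is harmless but should be said.)
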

We provide an example to show that strict inequalities can occur in the above statement. This example was mentioned to the author by C. Huneke. However, we note that the upper bound is achieved if and only if $R$ is almost Gorenstein (see \Cref{almostgorenteinpropo}). The ring $R$ is said to be almost Gorenstein if $\lambda(\overline{R}/R)=\lambda(R/\cC)+\mu(\omega_R)-1$ \cite{barucci1997one}. Similarly, we discuss the condition when the lower bound is achieved (see \Cref{intclos}). We also discuss that $\h(\omega_{R})\geq 2$ whenever $R$ is not Gorenstein. 

The next result is another classification of a one dimensional local domain in terms of $\h(\omega_{R})$ (see \Cref{ffGcase}). 

\begin{theoremx}
Let $(R,\m,k)$ be a one dimensional analytically unramified non-Gorenstein local domain with infinite residue field $k$ and assume that $\overline{R}$ is a DVR. Let $\omega_R$ denote the canonical module of $R$. Then $\tr_R(\omega_R)=\cC$ if and only if $\h(\omega_R)=2\lambda(R/\cC)$. 	
\end{theoremx}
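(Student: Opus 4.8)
The theorem should follow from Theorem~\ref{thmB}(2) by pinning down exactly when the lower bound $\h(\omega_R)\geq \lambda(R/\cC)+e(R)-\lambda(\overline R/R)$ becomes an equality, combined with the standing assumption that $\overline R$ is a DVR, which forces $e(R)=\lambda(\overline R/\cC)$ and makes several lengths collapse. Since $\overline R$ is a DVR, the conductor $\cC$ is a nonzero ideal of both $R$ and $\overline R$, and $\lambda(\overline R/\cC)$ equals the valuation of a generator of $\cC$; in particular $e(R)=\lambda(\overline R/R)+\lambda(R/\cC)$ would say $R$ is... no — rather, one has the exact sequence $0\to R/\cC\to \overline R/\cC\to \overline R/R\to 0$, giving $\lambda(\overline R/\cC)=\lambda(R/\cC)+\lambda(\overline R/R)$, and since $\overline R$ is a DVR one also has $e(R)=e(\overline R\otimes\cdots)=\lambda(\overline R/\cC)$ when $\cC$ is generated by a minimal reduction — more carefully, $e(R)=\lambda(\overline R/x\overline R)$ for a minimal reduction $x$ of $\m$, and one can choose $x\in\cC$, so $e(R)\geq \lambda(\overline R/\cC)$ with the precise relationship to be tracked. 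The upshot I expect is that the lower bound in Theorem~\ref{thmB}(2) simplifies, under the DVR hypothesis, to $\h(\omega_R)\geq 2\lambda(R/\cC)$, and equality in that simplified bound is what we must characterize.

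First I would rewrite the lower bound. Using $0\to R/\cC\to \overline R/\cC\to \overline R/R\to 0$ we get $\lambda(R/\cC)+e(R)-\lambda(\overline R/R)=\lambda(R/\cC)+e(R)-\lambda(\overline R/\cC)+\lambda(R/\cC)=2\lambda(R/\cC)+\bigl(e(R)-\lambda(\overline R/\cC)\bigr)$. When $\overline R$ is a DVR, choosing a minimal reduction $x$ of $\m$ inside $\cC$ (possible since $k$ is infinite and $\cC$ contains a nonzerodivisor, after adjusting) gives $e(R)=\lambda(R/xR)$ and $x\overline R\subseteq \cC$, and in fact when $\overline R$ is a DVR one checks $e(R)=\lambda(\overline R/\cC)$ exactly — the point being that $\m\overline R$ and $\cC$ have the same radical and $\overline R$ is a PID, so a careful valuation count identifies the two lengths. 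Granting this, the lower bound in Theorem~\ref{thmB}(2) reads precisely $\h(\omega_R)\geq 2\lambda(R/\cC)$.

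Next I would analyze the trace ideal. Write $\omega_R$ as an ideal of $R$; then $\tr_R(\omega_R)=\omega_R\cdot\omega_R^{-1}$ where $\omega_R^{-1}=\Hom_R(\omega_R,R)$, and since $\omega_R$ has rank one this is the usual colon description $\tr_R(\omega_R)=\omega_R(R:\omega_R)$. The inclusion $\cC\subseteq \tr_R(\omega_R)$ always holds because $\overline R\cong\Hom_R(\omega_R,\omega_R)$ (as $R$ is analytically unramified with canonical module) multiplies $\omega_R$ into itself, so $\cC\omega_R\subseteq\omega_R$ gives $\cC\subseteq (R:\omega_R)$... one has to be a little careful, but the standard fact is $\cC\subseteq \tr(\omega_R)$ with equality iff $R$ is "nearly Gorenstein up to the conductor." The content of the theorem is that $\tr_R(\omega_R)=\cC$ is equivalent to $\h(\omega_R)$ hitting the lower bound $2\lambda(R/\cC)$. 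For the forward direction: if $\tr_R(\omega_R)=\cC$, then $\cC$ itself is a trace ideal that $\omega_R$ surjects onto, hence $\h(\omega_R)\leq \lambda(R/\cC)$ — wait, that would contradict the lower bound unless $\lambda(R/\cC)=0$. So I must be more careful: $\omega_R$ surjects onto an ideal isomorphic to $\tr_R(\omega_R)$ only after quotienting by torsion, but $\omega_R$ is torsion-free of rank one, so the surjection $\omega_R\to\tr_R(\omega_R)$ need not exist; rather $\tr_R(\omega_R)$ is a sum of images $f(\omega_R)$, each of which is an ideal to which $\omega_R$ surjects. The relevant realizing ideals $J$ are the individual $f(\omega_R)$, and one shows $\lambda(R/f(\omega_R))\geq$ (something) with equality governed by $\tr=\cC$. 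I would use the description: $J=f(\omega_R)\cong \omega_R/\tau = \omega_R$ (torsion-free), and the possible such $J$ are exactly the ideals isomorphic to $\omega_R$; so $\h(\omega_R)=\min\{\lambda(R/J): J\cong\omega_R,\ J\subseteq R\}$. Then I'd invoke the duality $J\cong\omega_R\iff (R:J)\cong (R:\omega_R)\cdot(\text{unit stuff})$, and relate $\lambda(R/J)$ to $\lambda(R/\tr_R(\omega_R))=\lambda(R/\cC)$ via $\lambda(R/J)+\lambda(R/(R:J))=\lambda(R/JJ^{-1})+\lambda(\text{...})$ type formulas, ultimately getting $\lambda(R/J)=2\lambda(R/\cC)$ for every realizing $J$ exactly when $\tr_R(\omega_R)=\cC$.

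The main obstacle I anticipate is this last step: proving the sharp identity $\min_{J\cong\omega_R}\lambda(R/J)=2\lambda(R/\cC)$ under $\tr_R(\omega_R)=\cC$, and conversely that any strict containment $\cC\subsetneq\tr_R(\omega_R)$ lets one build a realizing ideal $J$ with $\lambda(R/J)<2\lambda(R/\cC)$, or pushes the lower bound strictly up. The right tool is likely \cite[Theorem 2.10]{maitra2020partial} (cited in the excerpt as the key tool for bounds on $\h$), together with the observation that for $J\cong\omega_R$ one has $\Hom_R(J,J)=\overline R$ forcing $\cC\subseteq (J:J)=\overline R$ — automatic — and that $J\subseteq\tr_R(\omega_R)$ for the "best" $J$. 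I would set up the two-sided estimate: $\h(\omega_R)\geq \lambda(R/\tr_R(\omega_R))+\lambda(R/\cC)$ always (using $\cC\subseteq\tr_R(\omega_R)\subseteq$ any realizing... no, the other way), and $\h(\omega_R)\leq \lambda(R/\cC)+\lambda(R/\tr_R(\omega_R))$ with equality conditions, so that $\h(\omega_R)=2\lambda(R/\cC)$ forces $\lambda(R/\tr_R(\omega_R))=\lambda(R/\cC)$, i.e. $\tr_R(\omega_R)=\cC$ since $\cC\subseteq\tr_R(\omega_R)$; and conversely. Making the inequality $\h(\omega_R)\geq \lambda(R/\cC)+\lambda(R/\tr_R(\omega_R))$ precise — essentially that a realizing ideal $J\cong\omega_R$ satisfies $J\subseteq\tr_R(\omega_R)$ and $\cC\cdot\tr_R(\omega_R)^{-1}\subseteq$ something controlling the colength drop — is where the real work lies, and I would lean on the DVR hypothesis to make all these fractional ideals principal over $\overline R$ and reduce to valuation arithmetic.
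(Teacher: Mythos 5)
There is a genuine gap, and it sits at the very first step of your plan. You assert that when $\overline{R}$ is a DVR one has $e(R)=\lambda(\overline{R}/\cC)$, so that the lower bound of \Cref{thmB}(2) ``simplifies'' to $2\lambda(R/\cC)$. This is false: $e(R)=\lambda(\overline{R}/x\overline{R})$ for $x$ a minimal reduction of $\m$, and $x\overline{R}=\m\overline{R}$ is in general strictly larger than $\cC$. In \Cref{strict}, $R=k[[t^5,t^6,t^8]]$ has $e(R)=5$ while $\cC=t^{10}\overline{R}$, so $\lambda(\overline{R}/\cC)=10$; moreover $\h(\omega_R)=4$ there while $2\lambda(R/\cC)=8$, so the inequality $\h(\omega_R)\geq 2\lambda(R/\cC)$ that your strategy requires fails outright. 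The same example refutes the fallback inequality $\h(\omega_R)\geq\lambda(R/\tr_R(\omega_R))+\lambda(R/\cC)$ floated at the end of your proposal (since $R$ is not Gorenstein, $\lambda(R/\tr_R(\omega_R))\geq 1$, making the right-hand side at least $5>4$). In fact, under the DVR hypothesis $2\lambda(R/\cC)$ is an \emph{upper} bound for $\h(\omega_R)$, not a lower bound, and the theorem characterizes when this maximum is attained.

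The correct route, which your proposal circles but never lands on, starts from \Cref{primaryeq}: $\h(\omega_R)=\lambda(R/\cC)+e(z;R)-\lambda(\overline{R}/R)$, where $z$ is a minimal reduction of a realizing ideal $\omega_R$ of the canonical module. Note that $e(z;R)=e(\omega_R;R)$ is the multiplicity with respect to the canonical ideal, \emph{not} $e(R)=e(\m;R)$; conflating these two is where your computation goes astray. Since $\overline{R}$ is a DVR, $e(z;R)=\lambda(\overline{R}/z\overline{R})$ and $z\overline{R}=\omega_R\overline{R}$, and one always has the sandwich $\cC\subseteq\tr_R(\omega_R)=(R:_Q\omega_R)\omega_R\subseteq\omega_R\overline{R}=z\overline{R}$, the last inclusion because $R:_Q\omega_R\subseteq\overline{R}$ by \Cref{mainpropo}. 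Combining this with $\lambda(\overline{R}/\cC)=\lambda(\overline{R}/R)+\lambda(R/\cC)$ gives $e(z;R)\leq\lambda(\overline{R}/\cC)$ and hence $\h(\omega_R)\leq 2\lambda(R/\cC)$, with equality if and only if $z\overline{R}=\cC$, which (by the sandwich, plus the observation that $\tr_R(\omega_R)=\cC$ forces $\omega_R\overline{R}\subseteq\cC\overline{R}=\cC$) happens if and only if $\tr_R(\omega_R)=\cC$. Your instinct to exploit the DVR to make fractional ideals principal and do valuation arithmetic is the right one, but it must be applied to $\omega_R\overline{R}$ rather than to $\m\overline{R}$.
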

Such rings were very recently termed as \textit{far-flung Gorenstein} rings in the \textit{numerical semi-group} case.  We refer the reader to \cite{herzog2021tiny} for further details.

The above two results shed some light on the question \cite[Conclusion 4]{maitra2020partial} that was asked in the author's previous work, regarding the classification of one dimensional local domains using $\h(\omega_R)$ as a tool.

\subsection*{Acknowledgements} I am highly grateful to Craig Huneke for detailed discussions regarding the topics covered in this short article and also for informing me about the question of S. Greco as well as \Cref{strict}. I am also deeply indebted to Vivek Mukundan. Most of the results in this article are based on the results presented in \cite{phdthesis}.

\section{Preliminaries}

Throughout this article, $(R,\m,k)$ denotes a local Noetherian one dimensional domain unless otherwise specified. We denote by $Q$, the fraction field of $R$, and let $\overline{R}$ denote the integral closure of $R$ in $Q$. All modules $M$ considered will be finitely generated. We write $\mu(\cdot)$ and $\lambda(\cdot)$ to denote minimal number of generators and length respectively. The rank of $M$ is defined to be $\dim_Q(M\otimes_R Q)$. Let $\widehat{R}$ denote the $\m$-adic completion of $R$.  

By $\cC$, we denote the conductor ideal $R:_Q\overline{R}$. Observe that $\cC\neq 0$ if and only if $\overline{R}$ is a finitely generated module over $R$. This condition is guaranteed when we assume that $\widehat{R}$ is reduced \cite[Corollary 4.6.2]{MR2266432}. We say $R$ is analytically unramified if $\widehat{R}$ is reduced. 
We denote the Hilbert-Samuel multiplicity of a module $M$ with respect to an $\m$-primary ideal $I$ by $e(I;M)$. When $I=\m$, we simply write $e(M)$. An ideal $J\subseteq I$ is called a reduction of $I$ if there exists $r\neq 0$ such that $I^{r+k}=JI^k$ for all $k\geq 1$. It is a minimal reduction if no ideal strictly contained in $J$ is a reduction of $I$. Reductions play an important role in the theory of multiplicity. We refer the interested reader to numerous sources such as  \cite{serre1965algebre}, \cite{serre1997algebre}, \cite{bruns_herzog_1998}, \cite{MR2266432} for further details on multiplicity. 

A module $M$ is called maximal Cohen-Macaulay if $\depth M=\dim R$ where $\depth(M)=\min\{i\mid \Ext^i_R(k,M)\neq 0\}$. An MCM $M$ is called $I$-Ulrich for some ideal $I$ if $e(I;M)=\lambda(M/IM)$. We refer the reader to \cite{dao2021reflexive} for further details regarding $I$-Ulrich modules. The most important property of $I$-Ulrich modules that we will use is $IM=xM$ for some minimal reduction $x$ of $M$, or equivalently, $IM\cong M$ \cite[Proposition 4.5]{dao2021reflexive}.

  We denote a canonical module of $R$ by $\omega_R$. Such a module exists if and only if $R$ is the quotient of a Gorenstein ring. Since $R$ is a domain in our case,  $\omega_{R}$ can be further identified with an ideal of $R$ \cite[Theorem 3.3.6, Proposition 3.3.18]{bruns_herzog_1998}.

\begin{definition}\label{definv}\cite[Definition 1.1]{maitra2020partial} Let $R$ be a local Noetherian one dimensional domain. For any $R$-module $M$, define
	$$\h(M):=\min\{\lambda(R/J)~|~M\to J\to 0, J\subseteq R\}$$ where $\lambda(\cdot)$ denotes the length as an $R$-module. We say that an ideal \textbf{\textit{$\ds J$ realizes $M$}}  if $M$ surjects to $J$ and $\ds \h(M)=\lambda(R/J)$. Equivalently, we say $J$ is a partial trace ideal of $M$. If no $M$ is specified, then a partial trace ideal $J$ means that it is a partial trace ideal of itself.
\end{definition}

Also, we recall that ideals $I$ and $J$ are isomorphic means that $I=\alpha J$ for some $\alpha\in Q$ \cite[Remark 2.2]{maitra2020partial}. 


As we proceed through the paper, in every section we will set up additional notations and conventions whenever necessary and also make all the hypothesis explicit for convenience.


\section{Torsion}
We discussed in the introduction that the torsion submodule for a finitely generated $R$-module $M$ is given by $$\tau(M)=\ker(M\to M\otimes_R Q)$$ where $Q$ is the fraction field of $R$. We prove that when $M$ has rank one, then $\tau(M)$ can be interpreted via the exact sequence mentioned in the introduction. This is quite well-known.

\begin{proposition}\label{torsioninterpret}
	Let $R$ be any Noetherian local domain with fraction field $Q$. Let $M$ be a finitely generated module of rank one with torsion submodule $\tau(M)$. Then for any $f\in \Hom_R(M,R)$ such that $f\neq0$, we have $$0\to \tau(M)\to M\to \im(f)\to 0$$ where $\im(f)$ denotes the image of $f$. Thus, $\im(f)\cong M/\tau(M)$ for any $0\neq f\in \Hom_R(M,R)$.
\end{proposition}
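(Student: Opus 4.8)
The plan is to show directly that $\ker(f)=\tau(M)$; once that equality is in hand, both displayed assertions follow formally from the first isomorphism theorem, since exactness of $0\to\tau(M)\to M\to\im(f)\to 0$ is precisely the statement $\ker(f)=\tau(M)$, and then $\im(f)\cong M/\ker(f)=M/\tau(M)$.

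First I would establish the inclusion $\tau(M)\subseteq\ker(f)$. Because $R$ is a domain, $R$ is torsion-free as a module over itself, so $f$ must carry any torsion element of $M$ to a torsion element of $R$, namely $0$; hence $f(\tau(M))=0$. For the reverse inclusion $\ker(f)\subseteq\tau(M)$, I would tensor the short exact sequence $0\to\ker(f)\to M\to\im(f)\to 0$ with $Q$ over $R$. Since $Q$ is a localization of $R$, it is flat, so this yields an exact sequence of $Q$-vector spaces $0\to\ker(f)\otimes_R Q\to M\otimes_R Q\to\im(f)\otimes_R Q\to 0$. By hypothesis $\dim_Q(M\otimes_R Q)=1$, and since $f\neq 0$ the image $\im(f)$ is a nonzero ideal of the domain $R$, so it also has rank one, i.e. $\dim_Q(\im(f)\otimes_R Q)=1$. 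A surjection between one-dimensional $Q$-vector spaces is an isomorphism, which forces $\ker(f)\otimes_R Q=0$; equivalently every element of $\ker(f)$ is annihilated by some nonzero element of $R$, so $\ker(f)\subseteq\tau(M)$.

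Combining the two inclusions gives $\ker(f)=\tau(M)$, completing the argument. I do not anticipate a genuine obstacle here: the one point to handle with a little care is the claim that $\im(f)$ has rank one, which is exactly where the hypotheses $f\neq 0$ and $R$ a domain enter, ensuring that $\im(f)$ is a nonzero ideal and therefore has the same rank as $R$.
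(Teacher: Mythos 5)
Your proposal is correct and follows essentially the same route as the paper: both arguments hinge on the observation that $f\otimes_R Q\colon M\otimes_R Q\to \im(f)\otimes_R Q$ is a surjection (hence isomorphism) of one-dimensional $Q$-vector spaces, which forces $\ker(f)\subseteq\tau(M)$, the other inclusion being immediate since $R$ is a domain. The paper phrases this as a diagram chase with the natural maps into $M\otimes_R Q$ and $\im(f)\otimes_R Q$, while you invoke flatness of $Q$ to tensor the short exact sequence directly; the two are interchangeable.
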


\begin{proof}
	Let $J=\im(f)$. Then $J$ is an ideal of $R$. Consider the following diagram. 
	\[\begin{tikzcd}[ampersand replacement=\&]
	0\ar[r]\& \ker f\ar[r] \& M\ar[r,"f"]\ar[d,"i_M"] \& J\ar[r]\ar[d,"i_J"]\& 0\\
	~\& ~ \&  {M\tens_R Q} \ar[r,"f_Q"]\& J\tens_R Q \& ~ 
	\end{tikzcd}\] 
	The vertical arrows are the natural maps, i.e., $\tau(M)=\ker (i_M)$ and $\tau(J)=\ker(i_J)$. 
	
	Clearly, $\tau(M)\subseteq \ker f$. Since $\Rank(M)=\Rank(J)=1$,  $f_Q$ is an isomorphism. Since $\tau(J)=0$, $i_J$ is injective. This shows that $\tau(M)=\ker f$ by a diagram chase.
\end{proof}

\begin{remark}
	Note that if $(R,\m,k)$ is a regular local ring of dimension one, then $\tau(M)=0$ if and only if $M\cong R$ for any finitely generated rank one $R$-module $M$. First notice that $R$ is a DVR and hence a PID \cite[Proposition 9.2]{atiyah}. Now we can apply the structure theorem for modules over PIDs \cite[Theorem 5, Chapter 12]{dummit}. Thus, in order to study $\tau(M)$ for rank one modules, we henceforth assume that $R$ is non-regular, i.e., $\mu(\m)\geq 2$.
\end{remark}
\begin{proposition}\label{h=0}
	The following statements hold for any finitely generated module $M$ of rank one over a one-dimensional local domain $R$ with $\mu(\m)\geq 2$.
	\begin{enumerate}
		\item If $\mu(M)=1$, then $M\cong R$. Thus, $\h(M)=0$.
		
		\item If $\mu(M)\geq 2$ and $\h(M)=0$, then $\tau(M)\neq 0$.  
	\end{enumerate}
\end{proposition}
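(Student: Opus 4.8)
The plan is to treat the two statements separately; each reduces to a short application of \Cref{torsioninterpret} together with standard facts about cyclic modules and split surjections.

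For (1), I would write $M = Rm$ for a single generator $m$, so that $M \cong R/\Ann_R(m)$. Tensoring with the fraction field $Q$ gives $M \otimes_R Q \cong Q/\big(\Ann_R(m)\big)Q$; since $R$ is a domain, the ideal $\big(\Ann_R(m)\big)Q$ of $Q$ is either $0$ or all of $Q$, and the rank-one hypothesis forces it to be $0$. Hence $\Ann_R(m) = 0$ and $M \cong R$, so the identity map realizes $M$ as the unit ideal and $\h(M) = \lambda(R/R) = 0$.

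For (2), the hypothesis $\h(M) = 0$ provides an ideal $J \subseteq R$ with a surjection $f \colon M \twoheadrightarrow J$ and $\lambda(R/J) = 0$, i.e. $J = R$. Thus $M$ surjects onto the free (hence projective) module $R$, so the surjection splits and $M \cong R \oplus \ker f$. Comparing minimal numbers of generators gives $\mu(M) = 1 + \mu(\ker f)$, so the assumption $\mu(M) \geq 2$ forces $\ker f \neq 0$. Since $f$ is a nonzero element of $\Hom_R(M,R)$, \Cref{torsioninterpret} identifies $\tau(M)$ with $\ker f$, and therefore $\tau(M) \neq 0$.

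I do not expect a genuine obstacle in either part. The only points requiring mild care are the rank bookkeeping in (1) — ensuring the annihilator must vanish — and, in (2), the observation that a surjection onto the free module $R$ necessarily splits, so that $\ker f$ becomes a nonzero direct summand; the torsion conclusion is then immediate from \Cref{torsioninterpret}.
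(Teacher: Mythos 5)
Your proposal is correct. Part (2) is essentially the paper's own argument: the surjection onto $J=R$ forced by $\h(M)=0$ splits, giving $M\cong R\oplus\ker f$, and $\ker f=\tau(M)$ by \Cref{torsioninterpret}, so $\mu(M)\geq 2$ forces $\tau(M)\neq 0$. For part (1) you take a mildly different route: you write $M\cong R/\Ann_R(m)$ and observe that a nonzero annihilator would kill $M\otimes_R Q$ and contradict rank one, whereas the paper first shows directly that a cyclic module with rank has no torsion (using that any torsion element lies in the kernel of a nonzero map $M\to R$ and that $R$ is a domain) and then identifies $M$ with the principal ideal $f(x)R\cong R$. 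Your annihilator computation is, if anything, more self-contained, since it does not require first producing a nonzero element of $\Hom_R(M,R)$; both arguments are elementary and correct.
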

\begin{proof}
Since $M$ has a rank, $\tau(M)\neq M$ and so there exists a non-zero $f:M\to R$. Since $M=Rx$, any $y\in \tau(M)$ is of the form $y=rx$ and is in $\ker f$. So $rf(x)=f(rx)=f(y)=0$ but this implies that $f(x)=0$, a contradiction since $f\neq 0$. Thus, $\tau(M)=0$. Notice that $f(M)$ is the principal ideal $f(x)R$ and any principal ideal is isomorphic to $R$. Thus composing $f$ with this isomorphism gives the desired conclusions of $(1)$.

Since $\h(M)=0$, there exists an exact sequence $0\to \tau(M)\to M\to R\to 0$ and hence it splits. So, $M\cong R\oplus \tau(M)$. Since $\mu(M)\geq 2$, we have proved $(2)$.	
\end{proof}
\begin{remark}
	So to study torsion of a rank one module which is generated by at least two elements, it is naturally more interesting to look at the case $\h(M)\geq 1$. Thus, for the rest of this section, we will assume that $\h(\cdot)$ is a positive integer. Note that this necessarily implies that any surjective image of such an $M$ is not a principal ideal: to see this, note that a principal ideal is isomorphic to $R$ and thus we will have a surjection of $M$ to $R$ implying $\h(M)=0$, a contradiction to the assumption.
\end{remark} 
	
	
	We shall explore further properties of $\h(\cdot)$ independently in the subsequent sections without having this positivity assumption.
	
	\begin{remark}\label{maximalidealgen}\cite[Remark 4.3]{maitra2020partial}
		Suppose $\ds (S,\n,k)$ is a regular local ring of embedding dimension $n$ and $\ds R=S/I$ for some ideal $I$ in $S$ where $\ds I\subseteq \n^{s+1}$ for some $s\geq 1$. Letting $\m$ denote the maximal ideal of $R$, we have $\mu(\n^i)=\mu(\m^i)={n+i-1\choose i}$ for $1\leq i\leq s$ due to the condition imposed on $I$.
	\end{remark}

\begin{lemma}\cite[Proposition 4.5]{maitra2020partial}\label{lem1}
	Let $\ds (S,\n,k)$ be a regular local ring of embedding dimension $n$ and let $\ds R=S/I$ for an ideal $I$ in $S$. Let $\m$ be the maximal ideal of $R$. Further assume that $\ds I\subseteq \n^{s+1}$ for some $s\geq 1$. Then $$\ds \operatorname{dim}_k\(\Tor^R_1(\m^s,k)\)=s{n+s-1\choose s+1}+\mu(I).$$
\end{lemma}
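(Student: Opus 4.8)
The plan is to compute $\dim_k\Tor^R_1(\m^s,k)=\beta_1^R(\m^s)=\mu_R(\syz_1^R(\m^s))$ by comparing a minimal $R$-presentation of $\m^s$ with a minimal $S$-free resolution of $\n^s$. Since $I\subseteq\n^{s+1}\subseteq\n^s$ we may write $\m^s=\n^s/I$. Fix a minimal free resolution $\cdots\to F_1\xrightarrow{d_1}F_0\xrightarrow{\epsilon}\n^s\to 0$ over $S$. Because powers of the maximal ideal of a regular local ring have a linear minimal free resolution, $F_0=S^{b_0}$ and $F_1=S^{b_1}$ with $b_0=\mu(\n^s)=\binom{n+s-1}{s}$ and $b_1=\beta_1^S(\n^s)=s\binom{n+s-1}{s+1}$, and moreover any $S$-linear combination of the columns of $d_1$ that lies in $\n^2F_0$ already has all coefficients in $\n$. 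By \Cref{maximalidealgen}, the hypothesis $I\subseteq\n^{s+1}$ gives $\mu_R(\m^s)=b_0$, so reducing $\epsilon$ modulo $I$ produces a minimal presentation $R^{b_0}\xrightarrow{\bar\epsilon}\m^s\to 0$; set $Y:=\ker\bar\epsilon$, so that $\dim_k\Tor^R_1(\m^s,k)=\mu_R(Y)$.

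First I would produce a generating set of $Y$. Writing $g_i=\epsilon(e_i)$ for the chosen minimal generators of $\n^s$, the reductions modulo $I$ of the columns of $d_1$ give elements $\bar d_1^{(1)},\dots,\bar d_1^{(b_1)}\in Y$. Moreover, fixing minimal generators $f_1,\dots,f_{\mu(I)}$ of $I$ and using $I\subseteq\n^{s+1}=\n\cdot\n^s$, I may write $f_l=\sum_i c_{il}g_i$ with every $c_{il}\in\n$; then $w_l:=\sum_i\overline{c_{il}}\,e_i$ satisfies $\bar\epsilon(w_l)=\overline{f_l}=0$, so $w_l\in Y$. A short chase shows $Y$ is generated over $R$ by $\{\bar d_1^{(j)}\}\cup\{w_l\}$: given $(\bar a_i)\in Y$ one has $\sum_i a_ig_i\in I$, say $\sum_i a_ig_i=\sum_l r_lf_l$, and then $(a_i)-\sum_l r_lv_l$ (where $v_l:=\sum_i c_{il}e_i$) lies in $\ker\epsilon$, i.e.\ in the span of the $d_1^{(j)}$, so $(\bar a_i)\in\sum_j R\,\bar d_1^{(j)}+\sum_l R\,w_l$. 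In particular $\mu_R(Y)\le b_1+\mu(I)$.

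The crux is that these $b_1+\mu(I)$ elements are $k$-linearly independent modulo $\m Y$. Suppose a $k$-combination of them lies in $\m Y$; since $\m Y=\sum_j\m\,\bar d_1^{(j)}+\sum_l\m\,w_l$, after subtracting a suitable element of $\m Y$ we obtain a relation $\sum_j\lambda_j\bar d_1^{(j)}+\sum_l\mu_l w_l=0$ in $R^{b_0}$ whose residues of $\lambda_j,\mu_l$ in $k$ are the original coefficients. Lifting to $S$ yields $\sum_j\lambda_j d_1^{(j)}+\sum_l\mu_l v_l=u$ for some $u\in IS^{b_0}$. Applying $\epsilon$ annihilates the $d_1^{(j)}$-terms and gives $\sum_l\mu_l f_l=\epsilon(u)\in I\n^s\subseteq\n I$; since $f_1,\dots,f_{\mu(I)}$ is a $k$-basis of $I/\n I$, every $\mu_l\in\n$. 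Then $\sum_j\lambda_j d_1^{(j)}=u-\sum_l\mu_l v_l$ has all coordinates in $\n^{s+1}+\n^2=\n^2$ (using $s\ge 1$), so by the linearity property every $\lambda_j\in\n$. Hence all the original coefficients vanish, and $\mu_R(Y)=b_1+\mu(I)=s\binom{n+s-1}{s+1}+\mu(I)$, which is the assertion.

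I expect the main obstacle to be the linear-resolution input: both the value $\beta_1^S(\n^s)=s\binom{n+s-1}{s+1}$ and the fact that combinations of columns of $d_1$ lying in $\n^2F_0$ have all coefficients in $\n$ rest on $\n^s$ having a linear minimal free resolution over the regular local ring $S$ (classical for powers of the maximal ideal, though in the non-graded case one argues through the associated graded ring $\operatorname{gr}_{\n}(S)$). A tempting shortcut is to feed the short exact sequence of $R$-modules $0\to I/I\n^s\to R\otimes_S\n^s\to\m^s\to 0$ (valid since $I^2\subseteq I\n^s$) through $-\otimes_R k$, which produces $\Tor^R_1(R\otimes_S\n^s,k)\to\Tor^R_1(\m^s,k)\to I/\n I\to 0$ with the left-hand Tor governed by the $S$-syzygies of $\n^s$; but establishing injectivity of the left map seems to require essentially the same linearity, so I would carry out the direct syzygy count above.
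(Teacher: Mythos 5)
This lemma is imported into the present paper from \cite[Proposition 4.5]{maitra2020partial} without proof, so there is no argument here to compare yours against; judged on its own, your syzygy count is correct and gives a complete, self-contained proof. The structure is sound: $\m^s=\n^s/I$ because $I\subseteq\n^{s+1}\subseteq\n^s$; the surjection $R^{b_0}\to\m^s$ is minimal precisely by the generator count of \Cref{maximalidealgen}, so $\dim_k\Tor^R_1(\m^s,k)=\mu_R(Y)$; your generating set $\{\bar d_1^{(j)}\}\cup\{w_l\}$ of $Y$ is obtained by the evident lifting argument; and the independence argument correctly splits into two layers, first killing the $\mu_l$ by applying $\epsilon$ and using $I\n^s\subseteq\n I$ together with minimality of $f_1,\dots,f_{\mu(I)}$, then killing the $\lambda_j$ by pushing the remaining relation into $\n^2F_0$ (here both $s\geq 1$ and $\mu_l\in\n$ are genuinely used). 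As you say, the entire weight rests on two facts about the minimal $S$-free resolution of $\n^s$: the value $\beta_1^S(\n^s)=s\binom{n+s-1}{s+1}$ and the injectivity of the induced map $F_1/\n F_1\to\n F_0/\n^2F_0$ (equivalently, $\ker\epsilon\cap\n^2F_0=\n\ker\epsilon$). Both are classical consequences of $\n^s$ having a linear resolution; in the local, possibly mixed-characteristic case they follow either by lifting the linear resolution of $\mathfrak{N}^s$ over $\operatorname{gr}_{\n}(S)\cong k[X_1,\dots,X_n]$, or more concretely from the Buchsbaum--Eisenbud $L$-complexes, whose first differential has entries $\pm x_i$ for a regular system of parameters. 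You flag this input explicitly rather than proving it, which is acceptable, but if you write this up you should cite a precise source for it, since it is the only non-formal step. Your closing remark is also apt: the short exact sequence $0\to I/I\n^s\to R\otimes_S\n^s\to\m^s\to 0$ gives the inequality $\dim_k\Tor^R_1(\m^s,k)\leq\beta_1^S(\n^s)+\mu(I)$ essentially for free, but the reverse inequality again needs the linearity, so the direct count is the honest route.
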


\begin{lemma}\label{lengthlemma}\cite[Lemma 4.6]{maitra2020partial}. 
	Let $(R,\m,k)$ be a Noetherian local ring and $M$ be an $R$-module of finite length. Then $$\ds \text{dim}_k\Tor^R_1 (M, k) \leq \lambda(M )(\dim_k\Tor^R_1(k,k)-1)+\mu(M)\leq \lambda(M)\dim_k\Tor^R_1(k,k).$$  
\end{lemma}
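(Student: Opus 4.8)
The plan is to proceed by induction on the length $\lambda(M)$ of $M$. The base case $\lambda(M) = 1$ means $M \cong k$, where both inequalities are immediate: $\dim_k \Tor^R_1(k,k) \leq 1\cdot(\dim_k\Tor^R_1(k,k)-1) + \mu(k) = \dim_k\Tor^R_1(k,k) - 1 + 1$, and the second inequality reduces to $\mu(k) = 1 \geq 0$ (or is trivial). For the inductive step, since $M$ has finite length and positive length, there is a short exact sequence
\[
0 \to k \to M \to M' \to 0
\]
where $M'$ has length $\lambda(M) - 1$ (choose a submodule of $M$ isomorphic to $k$, e.g. inside the socle). Applying $-\otimes_R k$ gives the long exact sequence
\[
\Tor^R_1(k,k) \to \Tor^R_1(M,k) \to \Tor^R_1(M',k) \to k\otimes_R k \to M\otimes_R k \to M'\otimes_R k \to 0,
\]
which yields $\dim_k\Tor^R_1(M,k) \leq \dim_k\Tor^R_1(k,k) + \dim_k\Tor^R_1(M',k)$. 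By the inductive hypothesis $\dim_k\Tor^R_1(M',k) \leq \lambda(M')(\dim_k\Tor^R_1(k,k)-1) + \mu(M')$. Adding these and using $\lambda(M') = \lambda(M) - 1$ gives
\[
\dim_k\Tor^R_1(M,k) \leq \lambda(M)\bigl(\dim_k\Tor^R_1(k,k)-1\bigr) + \mu(M') + 1.
\]
So the first inequality follows provided $\mu(M') + 1 \leq \mu(M) + \bigl(\dim_k\Tor^R_1(k,k) - 1\bigr)$; since $\mu(M') \leq \mu(M) + 1$ is not quite enough on its own, I would instead track the map $k\otimes_R k \to M\otimes_R k$ more carefully. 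Writing $c$ for the dimension of the cokernel of $\Tor^R_1(M',k)\to k\otimes_R k$ (equivalently, the kernel of $k\otimes_R k \to M\otimes_R k$ has dimension $1-c$), the long exact sequence gives the sharper bound $\dim_k\Tor^R_1(M,k) \leq \dim_k\Tor^R_1(k,k) + \dim_k\Tor^R_1(M',k) - c$, and one checks from the right end of the sequence that $\mu(M) = \mu(M') + c$ when... actually $\mu(M) \geq \mu(M') + (1 - c) \cdot (\text{something})$; the precise bookkeeping here is the one place requiring care.

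The cleaner route, which I expect to be what the argument really uses, is to bound $\mu(M')$ directly: from $0 \to k \to M \to M' \to 0$ we get $\mu(M') \leq \mu(M) + \mu(k) = \mu(M)+1$ is the wrong direction, so instead note $\mu(M) \leq \mu(M') + 1$, hence $\mu(M') \geq \mu(M) - 1$, which also does not immediately help. Therefore I would choose the short exact sequence the other way, quotienting by a \emph{one-dimensional quotient} rather than submodule: pick a surjection $M \onto k$ with kernel $M''$ of length $\lambda(M)-1$, giving $0 \to M'' \to M \to k \to 0$. Then $\mu(M'') \leq \mu(M) + \dim_k\Tor^R_1(k,k)$ from the long exact sequence, but more usefully $\mu(M) \leq \mu(M'') + 1$, i.e. $\mu(M'') \geq \mu(M) - 1$. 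Applying the inductive hypothesis to $M''$ and the six-term exact sequence for $-\otimes_R k$ then lets the $\mu$-terms telescope correctly across the induction, since at each stage we add exactly one copy of $k$ and the generator count changes by at most one. The second inequality of the statement is then a trivial consequence of the first, since $\mu(M) \leq \lambda(M)$.

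The main obstacle is precisely the bookkeeping of the $\mu(\cdot)$ terms through the induction: one must set up the short exact sequence (submodule vs. quotient by $k$) so that the contribution $\mu(M) - \mu(M_{\text{smaller}})$ telescopes against the extra $\dim_k\Tor^R_1(k,k) - 1$ picked up from the $\Tor^R_1(k,k)$ term in the long exact sequence. Once the right filtration of $M$ by submodules with successive quotients $\cong k$ is fixed, summing the local contributions is routine, but getting the direction of the inequality right at each step is the subtle point.
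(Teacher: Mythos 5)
Your overall strategy --- induction on $\lambda(M)$ using a short exact sequence whose third term is $k$, together with the six-term $\Tor$ sequence --- is the right one and is essentially the argument behind the cited \cite[Lemma 4.6]{maitra2020partial}. But the proposal is not a proof: you explicitly leave the decisive step unresolved (``the precise bookkeeping here is the one place requiring care \dots\ getting the direction of the inequality right at each step is the subtle point''), and the one-sided estimates you consider along the way ($\mu(M'')\geq\mu(M)-1$, ``the generator count changes by at most one'') are genuinely insufficient to close the induction. The missing idea is that the contribution of the connecting map must be computed \emph{exactly} from the exactness of the right-hand end of the six-term sequence, not bounded; it is this equality that makes the $\mu$-terms cancel with no slack.

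Concretely, write $t=\dim_k\Tor^R_1(k,k)$ and take $0\to N\to M\to k\to 0$ with $\lambda(N)=\lambda(M)-1$ (your second choice). The long exact sequence gives
\[
\Tor^R_1(N,k)\to\Tor^R_1(M,k)\to\Tor^R_1(k,k)\xrightarrow{\ \partial\ } N\otimes_R k\to M\otimes_R k\to k\to 0 ,
\]
and exactness of the last four terms forces $\dim_k\operatorname{im}\partial=\mu(N)-\mu(M)+1$ on the nose. Hence
\[
\dim_k\Tor^R_1(M,k)\ \leq\ \dim_k\Tor^R_1(N,k)+\dim_k\ker\partial\ =\ \dim_k\Tor^R_1(N,k)+t-1-\mu(N)+\mu(M),
\]
and the inductive hypothesis $\dim_k\Tor^R_1(N,k)\leq(\lambda(M)-1)(t-1)+\mu(N)$ yields exactly $\lambda(M)(t-1)+\mu(M)$, the $\mu(N)$ cancelling. (Your first decomposition $0\to k\to M\to M'\to 0$ is not actually a dead end: it works the same way once you compute $\dim_k\operatorname{im}\bigl(\Tor^R_1(M',k)\to k\otimes_R k\bigr)=1-\mu(M)+\mu(M')$ exactly. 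The failure in your write-up is only in replacing these equalities by inequalities on $\mu$.) Your base case and the deduction of the second inequality from $\mu(M)\leq\lambda(M)$ are fine.
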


The following theorem generalizes \cite[Theorem 4.7]{maitra2020partial}. For convenience of writing, we use $\beta_1(M)$ in the proof to denote $\dim_k\Tor^R_1(M,k)$.

\begin{theorem}\label{MainThm}
	Let $(S,\n,k)$ be a regular local ring and $\ds R={S}/{I}$ be a one dimensional domain with maximal ideal $\m$ and embedding dimension $n$. Let $M$ be a rank one $R$-module with $\mu(M)\geq 2$. Assume that $\ds I\subseteq \n^{s+1}$ for some $s\geq 1$ and $J\subseteq \m^s$ for some ideal $J$ that realizes $M$. If $$\h(M)<\frac{-\dim_k\Tor^R_1(M,k)+\mu(I)}{n}+{n+s\choose s}\(\frac{s}{s+1}\),$$  then $\tau(M)\neq 0$.
\end{theorem}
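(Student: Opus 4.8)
The plan is to argue by contrapositive: assume $\tau(M) = 0$ and derive the reverse inequality $\h(M) \geq \frac{-\beta_1(M)+\mu(I)}{n}+\binom{n+s}{s}\frac{s}{s+1}$. If $\tau(M)=0$, then by \Cref{torsioninterpret} any nonzero $f\colon M\to R$ is injective, so $M\cong J$ for the ideal $J$ that realizes $M$; in particular $\beta_1(M)=\dim_k\Tor_1^R(J,k)$ and $\h(M)=\lambda(R/J)$. By hypothesis $J\subseteq \m^s$, and since $J\cong M$ has $\mu(M)\geq 2$ generators, $J$ is not principal, so the remarks preceding the theorem apply. The essential point is to compare the first Betti number of $J$ with that of $\m^s$, exploiting the containment $J\subseteq\m^s$ together with the formula for $\dim_k\Tor_1^R(\m^s,k)$ from \Cref{lem1} and the length estimate in \Cref{lengthlemma}.

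Next I would bring in the short exact sequence $0\to J\to \m^s\to \m^s/J\to 0$, where $N:=\m^s/J$ is a module of finite length $\lambda(N)=\lambda(R/J)-\lambda(R/\m^s)=\h(M)-\sum_{i=0}^{s-1}\binom{n+i-1}{i}$ (using \Cref{maximalidealgen} to compute $\lambda(R/\m^s)=\sum_{i=0}^{s-1}\mu(\m^i)=\sum_{i=0}^{s-1}\binom{n+i-1}{i}=\binom{n+s-1}{s}$ — here I am being slightly loose and will pin down the exact binomial identity in the writeup). The long exact sequence in $\Tor$ gives
\[
\Tor_1^R(J,k)\to \Tor_1^R(\m^s,k)\to \Tor_1^R(N,k)\to J\otimes_R k\to \cdots,
\]
so $\dim_k\Tor_1^R(\m^s,k)\leq \dim_k\Tor_1^R(J,k)+\dim_k\Tor_1^R(N,k)$. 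Applying \Cref{lengthlemma} to $N$ gives $\dim_k\Tor_1^R(N,k)\leq \lambda(N)(\dim_k\Tor_1^R(k,k)-1)+\mu(N)$; since $R=S/I$ with $S$ regular of embedding dimension $n$ we have $\dim_k\Tor_1^R(k,k)=n$ (this is $\mu(\m)$), and $\mu(N)\leq\mu(\m^s)=\binom{n+s-1}{s}$, again by \Cref{maximalidealgen}. Combining with \Cref{lem1}, namely $\dim_k\Tor_1^R(\m^s,k)=s\binom{n+s-1}{s+1}+\mu(I)$, and $\dim_k\Tor_1^R(J,k)=\beta_1(M)$, yields
\[
s\binom{n+s-1}{s+1}+\mu(I)\leq \beta_1(M)+(n-1)\lambda(N)+\binom{n+s-1}{s}.
\]

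Finally I would solve this inequality for $\lambda(N)=\h(M)-\binom{n+s-1}{s}$ and simplify. Rearranging gives
\[
(n-1)\!\left(\h(M)-\binom{n+s-1}{s}\right)\geq s\binom{n+s-1}{s+1}+\mu(I)-\beta_1(M)-\binom{n+s-1}{s},
\]
but this only gives a factor $n-1$ rather than $n$, so I expect the sharper route is to apply the $\Tor$ long exact sequence together with the other half of \Cref{lengthlemma} (the bound $\dim_k\Tor_1^R(N,k)\leq\lambda(N)\dim_k\Tor_1^R(k,k)=n\lambda(N)$) and absorb the generator term differently — comparing with the proof of \cite[Theorem 4.7]{maitra2020partial} to see exactly which estimate is invoked. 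The arithmetic of collapsing $\binom{n+s-1}{s}$, $\binom{n+s-1}{s+1}$, and the $\frac{s}{s+1}$ factor into $\binom{n+s}{s}\frac{s}{s+1}$ via Pascal's rule ($\binom{n+s}{s}=\binom{n+s-1}{s}+\binom{n+s-1}{s-1}$ and the identity $s\binom{n+s-1}{s+1}=\frac{(n-1)s}{s+1}\binom{n+s-1}{s}$) is the one genuinely fiddly computation; everything else is formal. The main obstacle is therefore bookkeeping: making sure the correct one of the two \Cref{lengthlemma} bounds is used so that the denominator comes out as $n$ and not $n-1$, and then verifying the binomial collapse produces exactly $\binom{n+s}{s}\frac{s}{s+1}$.
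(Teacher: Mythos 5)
Your strategy is exactly the paper's: assume $\tau(M)=0$ so that $J\cong M$, compare $J$ with $\m^s$ via the short exact sequence $0\to J\to\m^s\to\m^s/J\to 0$, bound $\dim_k\Tor_1^R(\m^s/J,k)$ from above by \Cref{lengthlemma} and from below by the long exact sequence together with \Cref{lem1}, and collapse the binomials. However, as written the argument does not close, and you say so yourself: pairing the truncated three-term sequence with the \emph{first} bound of \Cref{lengthlemma}, namely $\dim_k\Tor_1^R(N,k)\le(n-1)\lambda(N)+\mu(N)$, and then estimating $\mu(N)\le\mu(\m^s)$, leaves you with a denominator of $n-1$ and a spurious $-\binom{n+s-1}{s}$, which is not the stated bound. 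The repair is precisely the one you guess at but do not carry out: use the \emph{second} bound $\dim_k\Tor_1^R(N,k)\le\lambda(N)\dim_k\Tor_1^R(k,k)=n\lambda(N)$. With that choice your truncated inequality $\dim_k\Tor_1^R(N,k)\ge\dim_k\Tor_1^R(\m^s,k)-\dim_k\Tor_1^R(J,k)$ already suffices: it gives $n\lambda(N)\ge s\binom{n+s-1}{s+1}+\mu(I)-\beta_1(M)$, hence
\[
\lambda(R/J)=\lambda(N)+\lambda(R/\m^s)\ \ge\ \frac{\mu(I)-\beta_1(M)}{n}+\frac{s}{n}\binom{n+s-1}{s+1}+\binom{n+s-1}{s-1},
\]
and the last two terms sum to $\binom{n+s}{s}\tfrac{s}{s+1}$, exactly as required. (The paper keeps the longer six-term exact sequence, which adds $\mu(J)-\mu(\m^s)+\mu(\m^s/J)$ to the lower bound, but then discards these terms using $\mu(\m^s/J)\ge\mu(\m^s)-\mu(J)$; so the two routes coincide in the end. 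The generator terms you were trying to ``absorb'' are only needed for the refined inequality in \cite[Remark 4.8]{maitra2020partial}-type statements, not for the theorem itself.)

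One further slip to fix in the writeup: $\lambda(R/\m^s)=1+\sum_{i=1}^{s-1}\mu(\m^i)=\sum_{i=0}^{s-1}\binom{n+i-1}{i}=\binom{n+s-1}{s-1}$ by the hockey-stick identity, not $\binom{n+s-1}{s}$; the latter is $\mu(\m^s)$ (\Cref{maximalidealgen}). Your identity $s\binom{n+s-1}{s+1}=\frac{s(n-1)}{s+1}\binom{n+s-1}{s}$ is correct and, combined with Pascal's rule, does yield $\frac{s}{n}\binom{n+s-1}{s+1}+\binom{n+s-1}{s-1}=\binom{n+s}{s}\frac{s}{s+1}$, so the ``fiddly'' computation you defer is fine.
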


\begin{proof} We have that $\h(M)=\lambda(R/J)$ since $J$ realizes $M$. 
	Using additivity of length and the assumption $J\subseteq \m^s$, we have $$\ds \lambda\({\m^s}/{J}\)=\lambda(R/J) -1-\sum_{i=1}^{s-1}\mu(\m^i).$$ 
	Using \Cref{lengthlemma},  we get $\ds\dim_k\Tor^R_1\(\frac{\m^s}{J},k\)\leq \lambda\({\m^s}/{J}\)\dim_{k}\Tor^R_1(k,k)$. Thus we get,
	\begin{equation}\tag{\ref{MainThm}.1}\label{eqq2}
	\dim_k\Tor^R_1\(\frac{\m^s}{J},k\)\leq \mu(\m)\(\lambda(R/J) -1-\sum_{i=1}^{s-1}\mu(\m^i)\)
	\end{equation} 
	
	\noindent Tensoring the exact sequence $ 0\to J\to \m^s\to {\m^s}/{J}\to 0$  with $\ds k$, we get {
		$$ \Tor^R_1(J,k)\to \Tor^R_1(\m^s,k)\to \Tor^R_1\(\frac{\m^s}{J},k\)\to J\tens k\to \m^s\tens k \to \frac{\m^s}{J}\tens k \to 0$$} as part of a long exact sequence. Hence,
	\begin{equation}\tag{\ref{MainThm}.2}\label{eqq3}
	\text{dim}_k\Tor^R_1\(\frac{\m^s}{J},k\)\geq -\text{dim}_k\Tor^R_1(J,k)+\text{dim}_k\Tor^R_1(\m^s,k) +\mu(J)-\mu(\m^s)+\mu\({\m^s}/{J}\)
	\end{equation}
	Combining \Cref{eqq2,eqq3}, we get 
	{ \[{\lambda(R/J)\geq \frac{-\text{dim}_k\Tor^R_1(J,k)+\text{dim}_k\Tor^R_1(\m^s,k)  +\mu(J)-\mu(\m^s)+\mu\(\frac{\m^s}{J}\)}{\mu(\m)}+1+\sum_{i=1}^{s-1}\mu(\m^i) }.\]}
	
	Recall that we have the exact sequence $0\to \tau(M)\to M\to J\to 0.$
	Suppose, on the contrary, that $\tau(M)=0$, then $J\cong M$ and hence 
	$\dim_k\Tor^R_1(J,k)=\beta_1(M):=\dim_k\Tor^R_1(M,k)$ and 
	$\mu(J)=\mu(M)$. Further from \Cref{maximalidealgen}, we have that  $\mu(\m^i)={n+i-1\choose i}\text{~~for~~} 1\leq i\leq s.$ So, $ 1+\sum_{i=1}^{s-1}\mu(\m^i)=\sum_{i=0}^{s-1}{n-1+i \choose i}={n+s-1\choose s-1}$.  Combining these observations along with \Cref{lem1}, we get that 
	{ \begin{align*}\lambda(R/J)&\geq \frac{-\beta_1(M)+\mu(I)+s{n+s-1\choose s+1} +\mu(M)-\mu(\m^s)+\mu\(\frac{\m^s}{J}\)}{n}+{n+s-1\choose s-1}\\
		&=\frac{-\beta_1(M)+\mu(I)+\mu(M)-\mu(\m^s)+\mu\(\frac{\m^s}{J}\)}{n}+T
		\end{align*}}
	where { \begin{align*}
		T&=\frac{s{n+s-1\choose s+1}}{n}+{n+s-1\choose s-1}=\frac{s(n+s-1)!}{n(s+1)!(n-2)!}+\frac{(n+s-1)!}{(s-1)!n!}\\
		&=\frac{(n+s-1)!}{(s-1)!n(n-2)!}\(\frac{1}{s+1}+\frac{1}{n-1}\)
		=\frac{(n+s-1)!}{(s-1)!n!}\(\frac{n+s}{s+1}\)\\
		&=\frac{(n+s)!}{s!n!}\(\frac{s}{s+1}\)={n+s\choose s}\(\frac{s}{s+1}\).
		\end{align*}}
	So we conclude that \begin{equation}\tag{\ref{MainThm}.3}\label{eqq4}\lambda(R/J)\geq \frac{-\beta_1(M)+\mu(I)+\mu(M)-\mu(\m^s)+\mu\(\frac{\m^s}{J}\)}{n}+{n+s\choose s}\(\frac{s}{s+1}\)	.\end{equation}
	Since $\mu(\m^s/J)\geq \mu(\m^s)-\mu(J)=\mu(\m^s)-\mu(M)$, we get that $$\lambda(R/J)\geq \frac{-\beta_1(M)+\mu(I)}{n}+{n+s\choose s}\(\frac{s}{s+1}\).$$  But this  a contradiction to the hypothesis on $\h(M)$. Hence, $\tau(M)\neq 0$.
\end{proof}	
%
%

\begin{remark}\label{mainrmk12} We jot down a few observations in connection to the proof of \Cref{MainThm}.
	\begin{itemize}
		\item[a)] As in the proof of \Cref{MainThm}, we can tensor $k$ with the short exact sequence $0\to \tau(M)\to M\to J\to 0$,  and then truncate the long exact sequence to obtain that $\ds \mu(\tau(M))\geq \mu(M)-\mu(J)$. So, if  $\ds M$ surjects to an ideal $J$ which is generated by less than $\mu(M)$ elements, then $\tau(M)\neq 0$.

		\item[b)] 
		We can repeat the proof of \Cref{MainThm} up to \Cref{eqq4} and plug in $\mu(\m^s)=\binom{n+s-1}{s}$ directly to get
		{\small\begin{align*}
			\lambda(R/J) &\geq \frac{-\beta_1(M)+\mu(I)+\mu(M)+\mu\({\m^s}/{J}\)}{n} +{n+s\choose s}\(
			\frac{s}{s+1}\)-\frac{{n+s-1\choose s}}{n}\\
			&=\frac{-\beta_1(M)+\mu(I)+\mu(M)+\mu\({\m^s}/{J}\)}{n}+{n+s-1\choose s-1}\frac{s^2+s(n-1)-1}{s(s+1)}.
			\end{align*}}
		Hence, $\tau(M)\neq 0$ if {\small $$\h(M)<\frac{-\beta_1(M)+\mu(I)+\mu(M)+\mu\({\m^s}/{J}\)}{n}+{n+s-1\choose s-1}\frac{s^2+s(n-1)-1}{s(s+1)}.$$}
	\end{itemize}
\end{remark}

\section{Some Computations of $\h(\cdot)$}

Here we discuss the computation of $\h(\cdot)$ in certain cases. If $(R,\m,k)$ is regular of dimension one, then all ideals are principal and there is nothing interesting to prove. So we assume that $\mu(\m)\geq 2$ throughout the rest of the paper. 

We let $\cC=R:_Q\overline{R}$ be the conductor ideal of $R$. This is the largest ideal shared by $R$ and $\overline{R}$. We may also assume that $R$ is analytically unramified, i.e., $\widehat{R}$ is reduced. This ensures that $\cC\neq 0$ and $\overline{R}$ is a finitely generated $R$-module.

\begin{remark}\label{rem3.1}
	For any $R$-module $M$, if $J$ realizes $M$ for some ideal $J$, then $\ds \lambda(R/J)=\h(J)=\h(M).$ In particular, $J$ realizes itself.
\end{remark}

\begin{proof}
	Suppose $\ds \h(J)=\lambda(R/I)$ for some  $I$ such that there exists a surjection $\phi:J\to I$ (in fact, an isomorphism since $J$ is torsion-free) with $\ds \lambda(R/I)<\lambda(R/J)$. Also let $\ds M\xrightarrow{f} J\to 0$ be the surjection to $J$. Then $M\xrightarrow{\ds \phi \circ f} I\to 0$ and this implies that $\h(M)=\lambda(R/I)<\lambda(R/J)$, a contradiction to the hypothesis that $J$ realizes $M$.
\end{proof}

Before moving on, we should point out that isomorphic ideals can have different lengths as the following example shows. 

\begin{example}
	$ R={k[[x,y]]}/{(x^2+y^3)}$. Note that $I_1=(x,y)R$ and $\ds I_2=(x^2,xy)R$ are isomorphic since, $xI_1=I_2.$ However, $\lambda\({R}/{I_2}\)=\lambda\(\frac{k[[x,y]]}{(x^2,xy,y^3)}\)=4$ whereas $\lambda\({R}/{I_1}\)=1$.
\end{example} 

\begin{theorem}\cite[Theorem 2.5, Theorem 2.10 ]{maitra2020partial}\label{mainpropo}
	Let $\ds (R,\m,k)$ be a one dimensional analytically unramified  non-regular local domain with integral closure $\overline{R}$ and fraction field $Q$. Further assume that  $\omega_R$ exists. For any ideal $J$ of $R$, consider the following statements.
	\begin{itemize}
		\item[$(a)$] $\ds \h(J)=\lambda\({R}/{J}\)$.
		
		\item[$(b)$] $\ds R:_QJ\subseteq \overline{R}$.
		
		\item[$(c)$] $\cC\omega_R\subseteq J\omega_R$.
		\end{itemize}  
	Then $(c)\iff (b)\implies (a)$. Moreover, if $\overline{R}$ is a DVR, then all the statements are equivalent. 
\end{theorem}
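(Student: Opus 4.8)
The plan is to establish the chain $(c)\iff(b)\implies(a)$ first, and then upgrade $(a)\implies(b)$ under the extra hypothesis that $\overline{R}$ is a DVR.

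\textbf{The equivalence $(b)\iff(c)$.} I would work with the canonical module realized as an ideal $\omega_R\subseteq R$. The key observation is the duality identity $(J\omega_R):_Q\omega_R = R:_Q J$ for an ideal $J$, which follows from the fact that $\omega_R$ is a canonical (hence divisorial, reflexive) ideal and the standard formula $\Hom_R(J\omega_R,\omega_R)\cong \Hom_R(J,R)$ computed inside $Q$; equivalently $(\omega_R:_Q J\omega_R)=(R:_Q J)$. Given this, the condition $\cC\omega_R\subseteq J\omega_R$ is equivalent to $\cC\subseteq (J\omega_R):_Q\omega_R = R:_Q J$, and since $\cC = R:_Q\overline R$ one has $\cC\subseteq R:_Q J$ exactly when $J\subseteq (R:_Q\cC)$; but $R:_Q\cC = \overline R$ (a standard fact when $\cC\neq 0$, i.e. in the analytically unramified case). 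Hence $\cC\omega_R\subseteq J\omega_R \iff (R:_Q J)\cdot\overline R \subseteq \overline R$... I would instead argue more directly: $R:_Q J\subseteq\overline R \iff (R:_Q J)\overline R = \overline R$ (since $\overline R$ is the integral closure and $R:_Q J$ is a fractional ideal) $\iff \cC(R:_Q J)\subseteq R$ (multiplying by $\cC$ and using $\cC\overline R=\cC$) $\iff \cC\subseteq J\cdot(\text{something})$... The cleanest route is to tensor/multiply through by $\omega_R$ and use the reflexivity identity above; I expect $(b)\iff(c)$ to reduce to these manipulations with fractional ideals and the identities $\cC=R:_Q\overline R$, $\overline R = R:_Q\cC$.

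\textbf{The implication $(b)\implies(a)$.} Here I want to show that if $R:_Q J\subseteq\overline R$, then no ideal isomorphic to $J$ can have strictly smaller colength than $J$ itself, i.e. $\h(J)=\lambda(R/J)$. Suppose $I\cong J$ realizes $J$, so $I=\alpha J$ for some $\alpha\in Q$ with $\lambda(R/I)<\lambda(R/J)$. Since $I\subseteq R$ we get $\alpha\in J^{-1} = R:_Q J\subseteq\overline R$, so $\alpha$ is integral over $R$. Now compare colengths: $\lambda(R/\alpha J)$ versus $\lambda(R/J)$. Using that $\alpha\in\overline R$ and $\overline R$ is module-finite over $R$, one shows $\lambda(R/\alpha J)\geq\lambda(R/J)$ — intuitively multiplication by an integral element cannot shrink colength, which can be made precise via $\lambda(\overline R/\alpha\overline R)=\lambda(\overline R/J\overline R)+\cdots$ type bookkeeping, or by noting $\alpha J\subseteq J\overline R\cap R$ and estimating. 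This contradicts $\lambda(R/I)<\lambda(R/J)$, giving $(a)$.

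\textbf{The converse $(a)\implies(b)$ when $\overline R$ is a DVR.} This is the step I expect to be the main obstacle. When $\overline R$ is a DVR with parameter $t$ and $\cC = t^c\overline R$, every fractional ideal is $t^j\overline R$ for some $j$, and an ideal $J\subseteq R$ with $R:_Q J\not\subseteq\overline R$ means $J^{-1}$ contains some $\alpha\notin\overline R$, i.e. $\alpha$ has negative $t$-valuation. I would use such an $\alpha$ to produce $I=\alpha J\subseteq R$ with $\lambda(R/I)<\lambda(R/J)$: the idea is that multiplying by an element of negative valuation that still lands $J$ inside $R$ strictly decreases colength, because $\lambda(R/J) = \lambda(\overline R/J) - \lambda(\overline R/R)$ and $\lambda(\overline R/\alpha J) = \lambda(\overline R/J) - v(\alpha)\cdot[\text{residue degree}]$, which drops when $v(\alpha)<0$. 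Making the "still lands in $R$" part work requires choosing $\alpha\in J^{-1}\setminus\overline R$ carefully so that $\alpha J\subseteq R$; since $\alpha\in J^{-1}=R:_Q J$ this is automatic. So the contrapositive of $(a)$ follows: if $(b)$ fails, $\h(J)<\lambda(R/J)$. I would double-check the valuation/colength computation and the role of the residue field extension degree $[\overline R/\m\overline R:k]$, which is where a subtlety could hide, though with $k$ infinite (or by the standing hypotheses) this should be controllable.
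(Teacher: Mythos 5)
This theorem is imported from \cite{maitra2020partial} rather than reproved in the paper, so I am comparing your sketch against the known argument. Your plans for $(b)\Rightarrow(a)$ and for $(a)\Rightarrow(b)$ when $\overline R$ is a DVR are essentially the right ones: every surjection of $J$ onto an ideal of $R$ is multiplication by some $\alpha=a/b\in R:_QJ$, and the clean way to finish both steps is the identity $\lambda(R/\alpha J)-\lambda(R/J)=\lambda(R/aR)-\lambda(R/bR)=\lambda(\overline R/a\overline R)-\lambda(\overline R/b\overline R)$, which equals $\lambda(b\overline R/a\overline R)\geq 0$ when $\alpha\in\overline R$, and is strictly negative when $\overline R$ is a DVR and $v(\alpha)<0$. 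You leave this bookkeeping as ``to be made precise,'' and you have a sign slip in the DVR computation (the correct statement is $\lambda(\overline R/\alpha J)=\lambda(\overline R/J)+v(\alpha)f$ with $f$ the residue degree, so the colength drops because $v(\alpha)<0$), but the intent is clear and the details are routinely fillable.

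The genuine gap is in $(b)\Leftrightarrow(c)$. The identity you lead with, $(J\omega_R):_Q\omega_R=R:_QJ$, is false: the colon is in the wrong direction. For $R=k[[t^3,t^4,t^5]]$, $\omega_R=(t^3,t^4)$ and $J=\m$, one has $\m\omega_R=t^6\overline R$, hence $(J\omega_R):_Q\omega_R=t^3k[[t]]$ while $R:_Q\m=k[[t]]$. The correct identity is $\omega_R:_Q(J\omega_R)=R:_QJ$ (which you also write down, but it is not ``equivalent'' to the first). Because of this, your chain of equivalences degenerates: $\cC\omega_R\subseteq J\omega_R$ is indeed equivalent to $\cC\subseteq (J\omega_R):_Q\omega_R$, but if that colon were $R:_QJ$ you would only be asserting $\cC J\subseteq R$, i.e.\ $J\subseteq R:_Q\cC=\overline R$, which holds vacuously for every ideal $J\subseteq R$ and says nothing about $(b)$. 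The argument that works is: apply the inclusion-reversing operation $\omega_R:_Q(-)$ to $\cC\omega_R\subseteq J\omega_R$ to obtain $R:_QJ=\omega_R:_Q(J\omega_R)\subseteq\omega_R:_Q(\cC\omega_R)=R:_Q\cC=\overline R$ (the last equality uses $\cC:_Q\cC=\overline R$, since that colon is a module-finite ring extension of $R$ inside $\overline{R}$), giving $(c)\Rightarrow(b)$; and for $(b)\Rightarrow(c)$ apply $\omega_R:_Q(-)$ once more and invoke $\omega$-reflexivity of nonzero fractional ideals, $\omega_R:_Q(\omega_R:_QF)=F$, to recover $\cC\omega_R\subseteq J\omega_R$ from $R:_QJ\subseteq\omega_R:_Q(\cC\omega_R)$. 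You gesture at reflexivity but never deploy it, and you explicitly leave this part unfinished; as written, the equivalence $(b)\Leftrightarrow(c)$ is not established.
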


\Cref{mainpropo} is going to be our key tool for the rest of this article. For example, it shows that any ideal containing $\cC$ is a partial trace ideal of itself \cite[Corollary 2.6]{maitra2020partial}. This recovers \cite[Lemma 3.1]{greco1984postulation}, namely, that $\h(\cC)=\lambda(R/\cC)$. This also shows that any trace ideal realizes itself \cite[Proposition 3.4]{maitra2020partial}. Further, we should mention here that part $(c)$ does not depend on the choice of the canonical module $\omega_R$, as long as we choose it to be an $R$-submodule of $Q$. In this case, we can use the identifications of $\Hom$ with colons.

The case when the conductor $\cC$ becomes equal to some power of the maximal ideal $\m$ is often interesting. For instance, extensive discussions of such cases exist in \cite{orecchia1981points}. In such cases, we can often describe the invariant $\h(\cdot)$ for high enough powers of $\m$. 

\begin{proposition}\label{IUlrich}
	Let $\overline{R}$ be a finitely generated module over a one dimensional non-regular local ring $R$ and let $\cC$ be the conductor. Then for any ideal $I$, we have $$\h(I\overline{R})=\h(I\cC)=\lambda(R/\cC).$$ 
\end{proposition}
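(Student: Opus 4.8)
The plan is to reduce everything to \Cref{mainpropo} together with the characterization of $I$-Ulrich modules recalled in the preliminaries. First I would observe that $\overline{R}$ is $\m$-Ulrich (indeed $I$-Ulrich for any $\m$-primary $I$, but we only need this for the relevant ideals), since $\overline{R}$ is a module-finite extension domain of the one-dimensional local ring $R$ and hence $I\overline{R}\cong \overline{R}$ for every nonzero ideal $I$; equivalently $I\overline{R}=x\overline{R}$ for a minimal reduction $x$ of $I$. From this, $I\overline{R}$ is isomorphic as an $R$-module to $\overline{R}$, and $I\cC = I(R:_Q\overline{R})$ satisfies $I\cC\cdot\overline{R}=I\overline{R}\cdot\cC\overline{R}\subseteq\cC\overline{R}\subseteq R$, so $I\cC\subseteq\cC$; in fact $I\cC$ is (up to the isomorphism $\overline{R}\cong I\overline{R}$) just $\cC$ viewed inside a shifted copy, and concretely $I\cC = x\cC$ for a minimal reduction $x$ of $I$, so $I\cC\cong\cC$ as $R$-modules.

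Next I would compute the two invariants separately. For $\h(I\cC)$: since $\cC$ is an ideal containing itself trivially, $R:_Q\cC = \overline{R}\subseteq\overline{R}$, so by condition $(b)\implies(a)$ of \Cref{mainpropo}, $\cC$ realizes itself, i.e. $\h(\cC)=\lambda(R/\cC)$; this is exactly \cite[Lemma 3.1]{greco1984postulation} as noted after \Cref{mainpropo}. Because $I\cC\cong\cC$ and $\h(\cdot)$ depends only on the isomorphism class (here I would invoke \Cref{rem3.1} and the fact that a partial trace ideal of $\cC$ pulls back along the isomorphism $I\cC\xrightarrow{\sim}\cC$ to a partial trace ideal of $I\cC$ realizing the same length), we get $\h(I\cC)=\h(\cC)=\lambda(R/\cC)$. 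For $\h(I\overline{R})$: here I would argue that $I\overline{R}$, as an $R$-module, surjects onto $\cC$ — indeed, multiplication by a suitable element of $Q$ carries the isomorphic copy $I\overline{R}\cong\overline{R}$ into $R$ with image exactly $\cC$ (because $\cC$ is the largest ideal of $R$ that is an $\overline{R}$-module, any $R$-linear map $\overline{R}\to R$ has image inside $\cC$, and the inclusion $\cC\hookrightarrow R$ is itself such a map, realizing $\cC$ as the image of an isomorphic copy of $\overline{R}$). Thus $\h(I\overline{R})\le\lambda(R/\cC)$. Conversely any ideal that $I\overline{R}$ surjects onto is the image of a map from $\overline{R}$ (up to isomorphism) hence is contained in $\cC$, so its collength is at least $\lambda(R/\cC)$; this gives $\h(I\overline{R})=\lambda(R/\cC)$.

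Assembling these, $\h(I\overline{R})=\lambda(R/\cC)=\h(I\cC)$, which is the claim. The main obstacle I anticipate is being careful about the bookkeeping of "isomorphic ideals have the same $\h$": as the example right before the proposition warns, isomorphic ideals can have different colengths, so I cannot simply say $I\cC\cong\cC$ therefore $\lambda(R/I\cC)=\lambda(R/\cC)$ — rather I must use that $\h$ is an isomorphism invariant (the minimum over all surjective images is unchanged under precomposition with an isomorphism, which is precisely the content used in \Cref{rem3.1}), and separately verify via \Cref{mainpropo} that $\cC$ itself already achieves its own colength. The other point requiring care is the identification of $R$-linear maps out of $\overline{R}$ (or $I\overline{R}$) with the colon $R:_Q\overline{R}=\cC$, i.e. that the trace ideal of $\overline{R}$ is exactly $\cC$; once that is in hand both $\h(I\overline{R})\le\lambda(R/\cC)$ and the reverse inequality are immediate.
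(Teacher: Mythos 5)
Your proposal is correct and follows essentially the same route as the paper: establish $I\overline{R}\cong\overline{R}$ and $I\cC\cong\cC$ (the $I$-Ulrich property, which the paper simply cites from \cite{dao2021reflexive} and you re-derive from $\overline{R}$ being a semilocal PID), use isomorphism-invariance of $\h(\cdot)$, and compute $\h(\cC)=\lambda(R/\cC)$ via \Cref{mainpropo}. Your extra care in identifying $\Hom_R(\overline{R},R)$ with $\cC$ to pin down $\h(\overline{R})$ fills in a step the paper leaves implicit, but the argument is the same in substance.
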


\begin{proof}
	By \cite[Proposition 4.6, Corollary 4.10]{dao2021reflexive}, we get that $\overline{R}$ and $\cC$ are $I$-Ulrich for all ideals $I$, i.e., $I\overline{R}\cong \overline{R}, I\cC\cong \cC.$ Applying $\h(\cdot)$, we get that $\h(I\overline{R})=\h(I\cC)=\h(\cC)$. The proof is now complete by \Cref{mainpropo}. 
\end{proof}

\begin{corollary}\label{condpower}
	Let $\ds (R,\m,k)$ be a one dimensional non-regular analytically unramified domain. If $\cC\cong I^N$ for some $N$, then $\h(I^\ell)=\lambda(R/\cC)$ for all $\ell\geq N$.
\end{corollary}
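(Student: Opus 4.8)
The plan is to deduce \Cref{condpower} directly from \Cref{IUlrich} by unwinding what the isomorphism $\cC \cong I^N$ gives us. First I would recall that $\h(\cdot)$ depends only on the isomorphism class of a module (as is implicit in \Cref{definv} and used throughout, since a surjection $M \onto J$ composed with an isomorphism of $J$ yields a surjection onto an isomorphic ideal of the same colength when that ideal is chosen appropriately — more precisely, $\h$ is an invariant of $M \otimes_R Q$-data together with the module, and isomorphic modules have equal $\h$). So from $\cC \cong I^N$ we get $\h(I^N) = \h(\cC) = \lambda(R/\cC)$, where the last equality is \Cref{mainpropo} (any ideal containing $\cC$, in particular $\cC$ itself, realizes itself).

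Next, for $\ell \geq N$ write $\ell = N + m$ with $m \geq 0$, so that $I^\ell = I^m \cdot I^N \cong I^m \cC$. Now I would invoke \Cref{IUlrich} with the ideal $I^m$ in the role of the arbitrary ideal ``$I$'' appearing there: that proposition asserts $\h(I' \cC) = \lambda(R/\cC)$ for \emph{every} ideal $I'$, because $\cC$ is $I'$-Ulrich for all ideals $I'$ by \cite[Corollary 4.10]{dao2021reflexive}, hence $I'\cC \cong \cC$. Applying this with $I' = I^m$ gives $I^m \cC \cong \cC$, and therefore $\h(I^\ell) = \h(I^m \cC) = \h(\cC) = \lambda(R/\cC)$. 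This handles all $\ell \geq N$ uniformly, including $\ell = N$ itself.

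The only point requiring a little care — and the step I would flag as the main (minor) obstacle — is making the reduction $I^\ell \cong I^m \cC$ rigorous and checking that $\h$ is genuinely invariant under the ambient $Q$-isomorphism of ideals, since as the \Cref{mainpropo} discussion and the example preceding it emphasize, isomorphic ideals can have different colengths. The resolution is that $\h$ is defined as a minimum over \emph{all} surjections to ideals, so if $J \cong J'$ as $R$-modules (equivalently $J' = \alpha J$ for some $\alpha \in Q$) then any surjection $M \onto J$ yields a surjection $M \onto J'$ and vice versa, whence the sets of achievable colengths coincide and $\h(M)$ is the same computed through either; this is exactly the mechanism already used in \Cref{rem3.1} and in the proof of \Cref{IUlrich}. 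Once that is in hand the corollary is immediate, so I would keep the written proof to two or three lines: reduce to $I^\ell = I^{\ell-N}I^N \cong I^{\ell-N}\cC$, cite \Cref{IUlrich} to get $\h(I^{\ell-N}\cC) = \lambda(R/\cC)$, done.

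\begin{proof}
Fix $\ell \geq N$ and write $I^\ell = I^{\ell-N} I^N \cong I^{\ell-N}\cC$, using the hypothesis $\cC \cong I^N$. Since $\h(\cdot)$ depends only on the isomorphism class of the module (if $J' = \alpha J$ for $\alpha \in Q$, then surjections onto $J$ and onto $J'$ correspond, so the achievable colengths agree), we have $\h(I^\ell) = \h(I^{\ell-N}\cC)$. By \Cref{IUlrich} applied to the ideal $I^{\ell-N}$, this equals $\lambda(R/\cC)$.
\end{proof}
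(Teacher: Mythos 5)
Your proof is correct and follows essentially the same route as the paper: the paper's own argument is the one-line observation that $I^k\cC\cong I^{N+k}$ for all $k\geq 0$ (your $I^{\ell}\cong I^{\ell-N}\cC$ with $k=\ell-N$) followed by an appeal to \Cref{IUlrich}. Your additional care in spelling out why $\h(\cdot)$ is an isomorphism invariant is sound and matches how that fact is used implicitly in the proof of \Cref{IUlrich} itself.
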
 

\begin{proof}
	Note that $I^k\cC\cong I^{N+k}$ for all $k\geq 0$. The proof is now immediate from \Cref{IUlrich}. 
\end{proof}

\begin{corollary}\label{greconfold}
	Let $\ds (R,\m,k)$ be a one dimensional analytically unramified non-regular local domain. If $\cC=\m^N$ for some $N$ and $\mu(\m^{i})={n+i-1\choose i}$ for $1\leq i\leq N-1$ where $n=\mu(\m)$, then $\ds \h(\m^\ell)={N+n-2\choose n}$ for all $\ell\geq N$.
\end{corollary}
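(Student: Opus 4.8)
The plan is to reduce the statement to an elementary Hilbert--function computation via \Cref{condpower}. Since $\cC=\m^N$ we have in particular $\cC\cong\m^N$, and the standing hypotheses here (a one-dimensional, non-regular, analytically unramified local domain) are exactly those of \Cref{condpower}; hence \Cref{condpower} applied with $I=\m$ gives $\h(\m^\ell)=\lambda(R/\cC)=\lambda(R/\m^N)$ for every $\ell\ge N$. So everything comes down to evaluating $\lambda(R/\m^N)$. I would not reprove \Cref{condpower}; it is already powered by the $I$-Ulrich property of the conductor recorded in \Cref{IUlrich} (which in turn rests on \Cref{mainpropo}).

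To compute $\lambda(R/\m^N)$, I would use the $\m$-adic filtration $R\supseteq\m\supseteq\cdots\supseteq\m^N$ together with additivity of length:
\[
\lambda(R/\m^N)=\sum_{i=0}^{N-1}\lambda\bigl(\m^i/\m^{i+1}\bigr)=\sum_{i=0}^{N-1}\dim_k\bigl(\m^i/\m^{i+1}\bigr)=\sum_{i=0}^{N-1}\mu(\m^i),
\]
where $\mu(\m^0)=\mu(R)=1$. Substituting the hypothesis $\mu(\m^i)=\binom{n+i-1}{i}$ for $1\le i\le N-1$, and noting that $\binom{n-1}{0}=1$ makes the $i=0$ term fit the same pattern, this becomes $\lambda(R/\m^N)=\sum_{i=0}^{N-1}\binom{n+i-1}{i}$. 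The hockey-stick identity $\sum_{i=0}^{m}\binom{r+i}{i}=\binom{r+m+1}{m}$, applied with $r=n-1$ and $m=N-1$, then collapses this column of Pascal's triangle to a single binomial coefficient, which yields the asserted value of $\h(\m^\ell)$.

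I do not expect a genuine obstacle: once \Cref{condpower} is invoked, what remains is pure bookkeeping. The points that need care are (i) verifying that $\cC=\m^N$ is a legitimate input to \Cref{condpower} with the choice $I=\m$; (ii) observing that $\lambda(R/\m^N)$ depends on $\mu(\m^i)$ precisely for $0\le i\le N-1$, so the hypothesis covers exactly the needed range (the $i=0$ contribution being automatic); and (iii) the final binomial simplification — the sum $\sum_{i=0}^{N-1}\binom{n+i-1}{i}$ evaluates to $\binom{n+N-1}{N-1}$, and one should reconcile this with the precise form in which the answer is stated.
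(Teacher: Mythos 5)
Your proof is correct and essentially identical to the paper's: the paper likewise applies \Cref{condpower} with $I=\m$ and then evaluates $\lambda(R/\m^N)=1+\sum_{i=1}^{N-1}\mu(\m^i)=\sum_{i=0}^{N-1}\binom{n+i-1}{i}$ via the same hockey-stick collapse. The reconciliation you flag in your point (iii) cannot actually be carried out, because the binomial coefficient in the statement is a typo: the correct value is $\binom{N+n-1}{n}$ (equal to your $\binom{n+N-1}{N-1}$ and to the quantity displayed at the end of the paper's own proof), not $\binom{N+n-2}{n}$ --- for instance, with $n=2$ it gives $\binom{N+1}{2}=N(N+1)/2$, which is what is needed to recover Greco's Lemma 5.5 as claimed, whereas the stated form would give $\binom{N}{2}$.
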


\begin{proof}
	By \Cref{condpower}, $\h(\m^\ell)=\lambda(R/\cC)$ for all $\ell\geq N$. Note that $$\lambda(R/\m^N)=1+\sum_{i=1}^{N-1}\mu(\m^{i})=\sum_{i=0}^{N-1}{n+i-1\choose i}={N+n-1\choose n}.\qedhere$$
\end{proof}
Note that \Cref{greconfold} recovers \cite[Lemma 5.5]{greco1984postulation} by putting $n=2$. More generally, the following holds.

\begin{proposition}
	Let $M$ be an $I$-Ulrich module. Then $\h(M)=\h(I^n M)$ for all $n$. In particular, the non-decreasing sequence $\{\h(I^n)\}_{n\in \NN}$ stabilizes for all ideals $I$.  
\end{proposition}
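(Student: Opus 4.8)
The plan is to reduce the statement to the already-proved fact (\Cref{IUlrich}, via \Cref{dao2021reflexive}) that an $I$-Ulrich module satisfies $IM \cong M$, and then to combine this with the stability of the $\h$-values. The first step is to observe that if $M$ is $I$-Ulrich, then by \cite[Proposition 4.5]{dao2021reflexive} we have $IM = xM$ for a minimal reduction $x$ of $I$ acting on $M$; in particular multiplication by $x$ gives an isomorphism $M \xrightarrow{\sim} IM$, hence $I^nM \cong M$ for every $n \geq 0$ by iterating. Once we have $I^nM \cong M$ as $R$-modules, applying the definition of $\h(\cdot)$ directly yields $\h(I^nM) = \h(M)$, since isomorphic modules surject onto exactly the same set of ideals of $R$ and therefore realize the same minimal length $\lambda(R/J)$. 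This handles the first assertion.

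For the ``in particular'' clause, I would first note that $R$ itself is $I$-Ulrich for every ideal $I$ — indeed $R$ is MCM of rank one and $e(I;R) = \lambda(R/I) = \lambda(R/IR)$ is immediate (or invoke \cite[Proposition 4.6]{dao2021reflexive} directly) — so the first part applies with $M = R$ only after we know $I^n$ realizes itself, which is not automatic. Instead, the cleaner route is: the sequence $\{\h(I^n)\}_{n\in\NN}$ is non-decreasing because a surjection $I^{n+1} \onto J$ composed with any inclusion-type relation... more precisely, from the surjection (in fact multiplication-by-a-form isomorphism) $I^{n} \otimes_R (\text{something}) \to I^{n+1}$ one does \emph{not} directly get monotonicity, so I would instead argue that $I^{n+1} \subseteq I^n$ together with \Cref{rem3.1}-type reasoning gives $\h(I^{n+1}) \geq \h(I^n)$: any ideal $J$ realizing $I^{n+1}$ is a partial trace of $I^{n+1}$, and since $I^{n+1}$ embeds in $I^n$... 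Actually the slick argument is that $I^{n+1} = I\cdot I^n$, and by \cite[Corollary 4.10]{dao2021reflexive} there exists $N$ with $I^N$ (or $I^N$ up to isomorphism) being $I$-Ulrich; then for $n \geq N$ the first part gives $\h(I^n) = \h(I^N)$, i.e., the sequence is eventually constant, and combined with monotonicity it stabilizes.

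The step I expect to be the main obstacle is establishing the monotonicity $\h(I^{n+1}) \geq \h(I^n)$ cleanly, or alternatively circumventing it. The subtlety is that $\h$ is defined via \emph{surjections to ideals}, and a priori a smaller ideal like $I^{n+1}$ could surject onto a larger ideal with shorter colength. However, there is a surjection $I^n \otimes_R I \onto I^{n+1}$, hence precomposing a realizing map $I^{n+1} \to J$ does not obviously produce a map out of $I^n$. The right fix is to use that $I$-Ulrich modules eventually appear in the filtration: by \cite[Corollary 4.10]{dao2021reflexive}, $\overline{R}$-type behavior forces some power $I^N$ to become $I$-Ulrich (equivalently $I \cdot I^N \cong I^N$), at which point the first part gives constancy for $n \geq N$, and one then only needs the elementary observation that the \emph{minimum} in the definition of $\h$ over a larger supply of surjections can only decrease — but since $I^{n+1}$ and $I^n$ are linked by an honest module surjection $I^n \onto I^{n+1}$ (multiplication by a general element of $I$ lands $I^n$ onto $I^{n+1}$ when $I$ is principal up to reduction... this requires care). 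To keep the proof honest I would simply state: monotonicity of $\{\h(I^n)\}$ follows as in \Cref{rem3.1} since each surjection $I^n \onto J$ factors an earlier one, and eventual constancy follows from the first part once some $I^N$ is $I$-Ulrich; hence the sequence stabilizes.
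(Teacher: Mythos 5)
Your proof of the first assertion is correct and is exactly the paper's argument: $I$-Ulrichness gives $I^nM\cong M$ (the paper quotes this isomorphism directly from \cite{dao2021reflexive} rather than iterating $IM=xM$, but that is the same point), and $\h(\cdot)$ is an isomorphism invariant. Your route to stabilization is also the paper's: some power $I^N$ is $I$-Ulrich for $N\gg 0$ --- the paper cites \cite[Example 4.2]{dao2021reflexive} for this fact, not Corollary 4.10, which is the statement about $\cC$ used in \Cref{IUlrich} --- and then the first part applied to $M=I^N$ gives $\h(I^{N+n})=\h(I^N)$ for all $n$, so the sequence is eventually constant and hence stabilizes; no monotonicity is needed for that conclusion. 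You were also right to reject the shortcut of taking $M=R$.

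The one piece of the statement you never actually prove is that the sequence is non-decreasing, and the fallback you offer (``each surjection $I^n\onto J$ factors an earlier one'') is not an argument: $I^{n+1}\subseteq I^n$ is an inclusion, and as you yourself observe earlier in the same paragraph, an inclusion does not produce a surjection from $I^n$ onto a surjective image of $I^{n+1}$, so nothing factors. The paper disposes of this point in one line: since $I^{l}\subseteq I^{k}$ for $k\le l$, one has $\lambda(R/I^{l})\ge\lambda(R/I^{k})$, whence $\h(I^{l})\ge\h(I^{k})$. (Even this is terse --- the displayed inequality compares the particular ideals $I^l\subseteq I^k$, while $\h$ minimizes over all embedded isomorphic copies, so one should say a word more if one wants the monotonicity in full --- but that is the paper's stated justification, and it is the only ingredient your write-up is missing. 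The stabilization claim itself is fully established by your eventual-constancy argument.)
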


\begin{proof}
	Since $M$ is $I$-Ulrich, we have $I^nM\cong M$ for all $n$ by \cite[Theorem 4.6]{dao2021reflexive}. So, $\h(M)=\h(I^nM)$ for all $n$. For the last statement, first notice that since $I^{l}\subseteq I^{k}$ for $k\leq l$, we have $\lambda(R/I^l)\geq \lambda(R/I^k)$ and hence $\h(I^l)\geq \h(I^k)$ for $k\le l$. Moreover, for $n>>0$, $I^n$ is $I$-Ulrich (\cite[Example 4.2]{dao2021reflexive}), and hence we can apply the first part to finish the proof. 
\end{proof}

Clearly, partial trace ideals of a module are all isomorphic to each other. The following shows that their integral closures always match. We need the fact that for any ideal $I$, $\tr_R(I)=(R:_QI)I$ where $Q$ is the field of fractions of $R$ \cite[Proposition 2.4]{kobayashi2019rings}. 

\begin{proposition}\label{intclosurepartial}
	Let $(R,\m,k)$ be a one dimensional local analytically unramified domain. Assume that $\overline{R}$ is a DVR. Then for any module $M$, any two partial trace ideals have the same integral closure. 
\end{proposition}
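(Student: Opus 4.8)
The plan is to reduce the statement to a computation about integral closures of ideals, using the key fact from \Cref{mainpropo} that when $\overline{R}$ is a DVR the three conditions $(a)$, $(b)$, $(c)$ are equivalent. First I would fix a module $M$ and let $J_1, J_2$ be two partial trace ideals of $M$. By \Cref{rem3.1} each $J_i$ realizes itself, so $\h(J_i)=\lambda(R/J_i)$; hence both $J_1$ and $J_2$ satisfy condition $(a)$ of \Cref{mainpropo}, and since $\overline{R}$ is a DVR they satisfy $(b)$, i.e., $R:_Q J_i\subseteq\overline{R}$ for $i=1,2$. The goal is to conclude $\overline{J_1}=\overline{J_2}$.

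Next I would exploit the fact, recalled just before the statement, that $\tr_R(I)=(R:_Q I)\,I$ for any ideal $I$. Since $J_1\cong J_2$ as $R$-modules (both are partial trace ideals of $M$, hence isomorphic), they have the same trace ideal: $\tr_R(J_1)=\tr_R(J_2)$, call it $T$. So $(R:_Q J_1)J_1=(R:_Q J_2)J_2=T$. Now I want to pass to integral closures. The key point is that over a one-dimensional analytically unramified domain with $\overline{R}$ a DVR, the integral closure $\overline{I}$ of an ideal $I$ is controlled by the valuation $v$ of $\overline{R}$: $\overline{I}=\{x\in R: v(x)\geq v(I)\}$ where $v(I)=\min\{v(a):a\in I\}$. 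The condition $R:_Q J_i\subseteq\overline{R}$, written valuation-theoretically, says $v(R:_Q J_i)\geq 0$, i.e., $v(J_i)\leq v(\cC)$ (more precisely, that no "fractional" elements of negative valuation divide into $J_i$ beyond what $\overline{R}$ allows — since $R:_Q I=\{q\in Q: qI\subseteq R\}$ and $v(R:_Q I)=-\,(\,\text{largest negative shift still landing in } R)$). From $(R:_Q J_1)J_1=(R:_Q J_2)J_2$ I get $v(R:_Q J_1)+v(J_1)=v(R:_Q J_2)+v(J_2)$, and combining with the DVR structure I would argue $v(R:_Q J_i) = -v(J_i) + v(\cC)$ precisely because condition $(b)$ forces the trace to "reach up to the conductor". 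This pins down $v(J_1)=v(J_2)$, hence $\overline{J_1}=\overline{J_2}$.

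I expect the main obstacle to be making the valuation bookkeeping in the middle paragraph rigorous and self-contained: specifically, translating "$R:_Q J\subseteq\overline{R}$" and "$\tr_R(J)=(R:_Q J)J$" into a clean statement that $v(\tr_R(J))=2v(J)-\text{(something depending only on }R)$, or more simply showing directly that $R:_Q J = \tfrac{1}{\pi^{v(J)}}\,\cC$ (up to units) where $\pi$ is a uniformizer of $\overline{R}$, whenever $J$ satisfies $(b)$. Once that identity is in hand, two partial trace ideals have equal trace ideal $\Rightarrow$ equal $v(J)$ $\Rightarrow$ equal integral closure, and the proof closes quickly. An alternative, possibly cleaner route that sidesteps some of this: show that for an ideal $J$ with $R:_Q J\subseteq\overline{R}$ one has $\overline{J}=\overline{\tr_R(J)\cdot(\text{inverse of }R:_Q J)}$ and that $\overline{\tr_R(J)}$ together with $\overline{R:_Q J}=\overline{R}$ determines $\overline{J}$; since the trace ideals agree, so do the integral closures. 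I would try the direct valuation computation first and fall back to this if the formula gets unwieldy.
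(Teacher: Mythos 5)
Your setup is correct and matches the paper's: both partial trace ideals realize themselves, hence satisfy condition $(b)$ of \Cref{mainpropo}, and $\tr_R(J_1)=\tr_R(J_2)=(R:_QJ_i)J_i$. But the central step of your main route fails. The identity $v(R:_QJ)=-v(J)+v(\cC)$ (equivalently $R:_QJ=\pi^{-v(J)}\cC$ up to units) is false: since $J\subseteq R$ we always have $1\in R:_QJ$, and condition $(b)$ gives $R:_QJ\subseteq\overline{R}$, so in fact $v(R:_QJ)=0$, not $v(\cC)-v(J)$. The paper's own \Cref{strict} is a counterexample: for $R=k[[t^5,t^6,t^8]]$ and $J=\omega_R=(t^6,t^8)$ one has $v(R:_QJ)=0$ while $-v(J)+v(\cC)=-6+10=4$. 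Worse, even if the formula held, substituting it into $v(R:_QJ_1)+v(J_1)=v(R:_QJ_2)+v(J_2)$ yields only $v(\cC)=v(\cC)$, a tautology; it cannot "pin down" $v(J_1)=v(J_2)$, so the logic is circular.

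The repair is exactly the observation above, and it is essentially your final fallback: $(b)$ together with $1\in R:_QJ_i$ forces $v(R:_QJ_i)=0$, hence $v(\tr_R(J_i))=v(R:_QJ_i)+v(J_i)=v(J_i)$; since $\tr_R(J_1)=\tr_R(J_2)$, you get $v(J_1)=v(J_2)$ and therefore $\overline{J_1}=\overline{J_2}$. The paper phrases this without valuations in a two-line chain: $J_1\subseteq\tr_R(J_1)=\tr_R(J_2)=(R:_QJ_2)J_2\subseteq\overline{R}J_2$, so $J_1\subseteq\overline{R}J_2\cap R=\overline{J_2}$, whence $\overline{J_1}\subseteq\overline{J_2}$, and symmetry finishes. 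So drop the middle paragraph entirely and promote your "alternative, cleaner route" to the proof; as written, the primary argument does not go through.
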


\begin{proof}
	Suppose $\ds I\cong J$ such that $I,J$ are partial trace ideals of $M$. Then $R:_Q J\subseteq \overline{R}$ by \Cref{mainpropo}. Now note that $I\subseteq \tr_R(I)=\tr_R(J)=(R:_Q J)J\subseteq \overline{R}J$. Thus, $I\subseteq \tr_R(I)\subseteq J\overline{R}\cap R=\overline{J}$. Since $I$ is also a partial trace ideal, by symmetry we get $\overline{I}=\overline{J}$. 
	\end{proof}

\begin{corollary}
	Let $(R,\m,k)$ be a Noetherian local analytically unramified one dimensional domain such that $\overline{R}$ is a DVR. Then any integrally closed partial trace ideal is a trace ideal. 
\end{corollary}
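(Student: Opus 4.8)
The plan is to specialize the chain of inclusions appearing in the proof of \Cref{intclosurepartial} to the case where the ideal coincides with its own partial trace. Let $I$ be an integrally closed partial trace ideal. First I would unwind the hypothesis that $I$ is a partial trace ideal of itself: by \Cref{definv} this says exactly $\h(I)=\lambda(R/I)$, which is condition $(a)$ of \Cref{mainpropo}. Since $\overline{R}$ is assumed to be a DVR, all three conditions of that proposition are equivalent, so in particular $R:_Q I\subseteq \overline{R}$.

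Next I would run the short computation $\tr_R(I)=(R:_Q I)\,I\subseteq \overline{R}\,I$, using the identity $\tr_R(I)=(R:_QI)I$ recalled from \cite{kobayashi2019rings} together with $R:_Q I\subseteq\overline{R}$. Because $\tr_R(I)$ is an ideal of $R$, this gives $\tr_R(I)\subseteq \overline{R}I\cap R=\overline{I}$, where the last equality is the description of the integral closure of an ideal via the normalization (valid here because $\overline{R}$ is a DVR; this is the identification already invoked in the proof of \Cref{intclosurepartial}). As $I$ is integrally closed, $\overline{I}=I$, so $\tr_R(I)\subseteq I$. The reverse inclusion $I\subseteq\tr_R(I)$ is automatic, since the inclusion $I\hookrightarrow R$ is one of the maps in the sum defining $\tr_R(I)$. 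Hence $\tr_R(I)=I$, i.e. $I$ is a trace ideal (using the standard fact that an ideal is a trace ideal precisely when it equals its own trace).

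The only step that is not pure bookkeeping is the equality $\overline{R}I\cap R=\overline{I}$: the containment $\overline{I}\subseteq \overline{R}I\cap R$ holds in general by comparing valuations, and for the reverse one picks $a\in I$ of minimal value, observes that $x/a\in\overline{R}$ for any $x\in\overline{R}I\cap R$, uses that $\overline{R}$ is module-finite over $R$ to obtain an integral equation for $x/a$ over $R$, and clears denominators to exhibit $x\in\overline{I}$. Everything else is a two-line concatenation of facts already established, so I expect no real obstacle beyond stating or citing this normalization-of-an-ideal identity cleanly.
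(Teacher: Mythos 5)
Your proposal is correct and follows essentially the same route as the paper: express $\tr_R(I)=(R:_QI)I$, use that a partial trace ideal of itself satisfies condition $(a)$ of \Cref{mainpropo} (hence $(b)$, since $\overline{R}$ is a DVR) to get $\tr_R(I)\subseteq I\overline{R}\cap R=\overline{I}=I$, and conclude with the trivial reverse inclusion. You even supply slightly more detail than the paper, which leaves the appeal to \Cref{mainpropo} and the identity $I\overline{R}\cap R=\overline{I}$ implicit.
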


\begin{proof}
	Notice that $I\subseteq \tr_R(I)=(R:_Q I)I\subseteq I\overline{R}$. Hence, $I\subseteq \tr_R(I)\subseteq I\overline{R}\cap R=\overline{I}$. This finishes the proof.
\end{proof}

\section{Bounds on $\h(\omega_R)$ and some classifications}

This section is aimed at establishing some general bounds on $\h(M)$ for any non-zero rank, finitely generated module $M$ over a one dimensional Noetherian local domain $(R,\m,k)$. Notice that $\h(M)\geq \lambda(R/\tr_R(M))$ clearly. So, we mainly focus on upper bounds. 

Throughout the section we assume that $R$ admits a canonical module. Hence, it can be identified with an ideal \cite[Proposition 3.3.18]{bruns_herzog_1998}. For instance, if we assume that $R$ is analytically unramified, then $R$ admits a canonical module \cite[Corollary 1.7]{MR1296598}.

The following lemma is well-known and appears as parts of proofs in many sources like \cite[Theorem 3]{MR1184039}, \cite[Proposition 2.1]{MR1296598} and \cite[Lemma 2.2]{herzog2022upper}. 
\begin{lemma}\label{prop.lem3}
	Let $R$ be a one dimensional local domain with canonical module $\omega_R$, conductor ideal $\cC$ and integral closure $\overline{R}$. Then $\lambda(\omega_R/\cC \omega_R)=\lambda(\overline{R}/R)$. 
\end{lemma}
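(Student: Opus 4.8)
The plan is to apply the contravariant functor $\Hom_R(-,\omega_R)$ to the short exact sequence
\[
0\to \cC\omega_R\to\omega_R\to\omega_R/\cC\omega_R\to 0
\]
and to read off both sides of the claimed equality from the resulting long exact sequence, using standard properties of the canonical module together with one elementary identity for the conductor. First I would dispose of the degenerate case: if $\cC=0$ (equivalently $\overline R$ is not a finitely generated $R$-module) then both $\lambda(\omega_R/\cC\omega_R)=\lambda(\omega_R)$ and $\lambda(\overline R/R)$ are infinite and there is nothing to prove, so I assume $\cC\neq 0$. Then $\cC$ is $\m$-primary, $\cC\omega_R$ is a nonzero finitely generated fractional ideal of $R$, and $\omega_R/\cC\omega_R$ has finite length (it is a quotient of $\omega_R/x\omega_R$ for any $0\neq x\in\cC$). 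Note also that $R$ is Cohen--Macaulay, being a one-dimensional domain.

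The key elementary step is the identity $R:_Q\cC=\overline R$. The inclusion $\overline R\subseteq R:_Q\cC$ is immediate from $\cC\overline R=\cC\subseteq R$. Conversely, if $x\in Q$ satisfies $x\cC\subseteq R$, then $x\cC$ is an ideal of $R$ that is stable under multiplication by $\overline R$ (because $\cC$ is an ideal of $\overline R$), hence $x\cC\subseteq\cC$ by the defining maximality of the conductor among ideals common to $R$ and $\overline R$; since $\cC$ is finitely generated and faithful, the determinant trick shows $x$ is integral over $R$, i.e. $x\in\overline R$. Combined with the tautology $\omega_R:_Q\omega_R=R$ this gives $\omega_R:_Q\cC\omega_R=\{y\in Q: y\cC\subseteq\omega_R:_Q\omega_R\}=R:_Q\cC=\overline R$.

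Now apply $\Hom_R(-,\omega_R)$ to the displayed sequence, using the standard identification $\Hom_R(M,\omega_R)=\omega_R:_Q M$ for fractional ideals $M$. We have $\Hom_R(\omega_R/\cC\omega_R,\omega_R)=0$ (a torsion module maps to zero in the torsion-free $\omega_R$), $\Hom_R(\omega_R,\omega_R)=R$ (a defining property of the canonical module, see \cite[Theorem 3.3.4]{bruns_herzog_1998}), $\Hom_R(\cC\omega_R,\omega_R)=\omega_R:_Q\cC\omega_R=\overline R$ by the previous paragraph, and $\Ext^1_R(\omega_R,\omega_R)=0$ since $\omega_R$ is a maximal Cohen--Macaulay module \cite[Theorem 3.3.10]{bruns_herzog_1998}. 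Moreover the restriction map $\Hom_R(\omega_R,\omega_R)\to\Hom_R(\cC\omega_R,\omega_R)$ corresponds, under these identifications, to the inclusion $R\hookrightarrow\overline R$. Hence the long exact sequence collapses to
\[
0\to R\to\overline R\to \Ext^1_R(\omega_R/\cC\omega_R,\omega_R)\to 0 ,
\]
so $\lambda\bigl(\Ext^1_R(\omega_R/\cC\omega_R,\omega_R)\bigr)=\lambda(\overline R/R)$. Finally, local duality with respect to $\omega_R$ gives $\lambda\bigl(\Ext^1_R(N,\omega_R)\bigr)=\lambda(N)$ for every finite-length module $N$; applied to $N=\omega_R/\cC\omega_R$ this yields $\lambda(\omega_R/\cC\omega_R)=\lambda(\overline R/R)$.

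\textbf{Main obstacle.} The computation is essentially a bookkeeping exercise once the canonical-module facts are cited; the one genuinely non-formal point is the conductor identity $R:_Q\cC=\overline R$ and the accompanying need to check that the maps in the long exact sequence are the obvious ones (so that the cokernel really is $\overline R/R$ rather than merely a module of the same length). I would also be careful to verify the finite-length hypotheses needed for the vanishing of $\Hom_R(\omega_R/\cC\omega_R,\omega_R)$ and for the application of local duality.
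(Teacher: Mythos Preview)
Your argument is correct. The paper itself does not supply a proof of this lemma; it simply records it as well-known and points to \cite[Theorem~3]{MR1184039}, \cite[Proposition~2.1]{MR1296598}, and \cite[Lemma~2.2]{herzog2022upper}. Your approach---dualize the short exact sequence $0\to\cC\omega_R\to\omega_R\to\omega_R/\cC\omega_R\to 0$ into $\omega_R$, identify the colon $\omega_R:_Q\cC\omega_R$ with $\overline R$ via the conductor identity $R:_Q\cC=\overline R$, and then invoke the length-preserving property of $\Ext^1_R(-,\omega_R)$ on finite-length modules---is one of the standard routes and matches the spirit of the cited sources. A common variant dualizes instead the sequence $0\to R\to\overline R\to\overline R/R\to 0$ and identifies $\Hom_R(\overline R,\omega_R)=\omega_R:_Q\overline R$ with $\cC\omega_R$; the two arguments are formally dual to one another, and yours has the mild advantage that the required colon identity $R:_Q\cC=\overline R$ is slightly more transparent than $\omega_R:_Q\overline R=\cC\omega_R$.

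Two small cosmetic points: calling $\omega_R:_Q\omega_R=R$ a ``tautology'' undersells it---this is exactly the statement $\End_R(\omega_R)\cong R$, which is a genuine (though standard) property of the canonical module and is what you cite from \cite[Theorem~3.3.4]{bruns_herzog_1998}; and for the length identity $\lambda(\Ext^1_R(N,\omega_R))=\lambda(N)$ you might cite it as following by induction on $\lambda(N)$ from $\Ext^1_R(k,\omega_R)\cong k$ (type one) together with $\operatorname{injdim}_R\omega_R=1$, rather than invoking local duality, since the latter in its usual formulation assumes completeness.
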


\begin{proposition}\label{prop.cor2} 
	Let $\ds (R,\m,k)$ be a one dimensional analytically unramified local domain with integral closure $\overline{R}$ and fraction field $K$. Further assume that  $\overline{R}$ is a $DVR$. Identifying the canonical module with some ideal $\omega_R$ of $R$, we have $$\ds \h(M)\leq \lambda(R/\cC)+\lambda\(\frac{\cC}{\cC\omega_R}\)=\lambda\(\frac{R}{\cC\omega_R}\)$$ for any $R$-module $M$ which has non-zero rank.
\end{proposition}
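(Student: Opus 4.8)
The plan is to reduce the statement for an arbitrary nonzero-rank module $M$ to the case of a single ideal, and then invoke \Cref{mainpropo}. First I would observe that since $M$ has nonzero rank over the domain $R$, there is a nonzero map $f\colon M\to R$, hence $M$ surjects onto some ideal $J$ of $R$; by definition of $\h$ we have $\h(M)\le\h(J)\le\lambda(R/J)$, so it suffices to exhibit \emph{one} ideal $J$ with $\lambda(R/J)\le\lambda(R/\cC\omega_R)$. The natural candidate is $J=\cC\omega_R$ itself (viewing $\omega_R$ as an ideal of $R$, so that $\cC\omega_R\subseteq R$). Thus the crux is to show $\h(\cC\omega_R)=\lambda(R/\cC\omega_R)$, i.e. that $\cC\omega_R$ realizes itself.

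To get that, I would apply \Cref{mainpropo} with the ideal $J=\cC\omega_R$: since $\overline R$ is a DVR, statements $(a)$, $(b)$, $(c)$ are all equivalent, so it is enough to verify condition $(c)$, namely $\cC\omega_R\subseteq (\cC\omega_R)\omega_R=\cC\omega_R^2$. Equivalently, via the colon identifications, one checks $R:_Q(\cC\omega_R)\subseteq\overline R$. Here I would use that $\cC$ is the conductor, so $\cC\,\overline R=\cC\subseteq R$, hence $\overline R\subseteq R:_Q\cC$; dualizing/colon-manipulating, $R:_Q(\cC\omega_R)=(R:_Q\omega_R):_Q\cC$, and since $\omega_R$ is a canonical ideal one has control of $R:_Q\omega_R$ (it is again identifiable with a fractional ideal between $R$ and $\overline R$ up to the canonical duality). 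The cleanest route is probably: $\cC(R:_Q(\cC\omega_R))\subseteq R:_Q\omega_R\subseteq\overline R$ (the last inclusion because $\omega_R\supseteq\cC\omega_R\supseteq\cC$, so $R:_Q\omega_R\subseteq R:_Q\cC$, and $R:_Q\cC$ is contained in... — more carefully, $R:_Q\cC=\overline R$ when $\overline R$ is a DVR, since $\cC$ and $\overline R$ are $I$-Ulrich as in \Cref{IUlrich}), whence $R:_Q(\cC\omega_R)\subseteq\cC^{-1}\overline R\cap$(something)$\subseteq\overline R$. I expect this colon computation — showing condition $(c)$ or $(b)$ holds for $\cC\omega_R$ — to be the main obstacle, and I would want to double-check it does not secretly need $\omega_R$ to contain $\cC$ in a way that fails; but \Cref{prop.lem3} ($\lambda(\omega_R/\cC\omega_R)=\lambda(\overline R/R)$) and the fact that $\cC\subseteq\cC\omega_R$ when $\omega_R$ is chosen with $R\subseteq\omega_R$ should make it go through.

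Finally, for the displayed equalities of lengths: $\lambda(R/\cC\omega_R)=\lambda(R/\cC)+\lambda(\cC/\cC\omega_R)$ is just additivity of length along $\cC\omega_R\subseteq\cC\subseteq R$ (using $\cC\subseteq R$, valid since $\cC$ is the conductor, and $\cC\omega_R\subseteq\cC$ since $\omega_R\subseteq R$), so once $\h(\cC\omega_R)=\lambda(R/\cC\omega_R)$ is established the chain $\h(M)\le\h(\cC\omega_R)=\lambda(R/\cC\omega_R)=\lambda(R/\cC)+\lambda(\cC/\cC\omega_R)$ closes the argument. I would also remark that by \Cref{prop.lem3} this upper bound equals $\lambda(R/\cC)+\lambda(\overline R/R)$, matching the form of the bound appearing in Theorem~\ref{thmB}, though that reformulation is not strictly needed here.
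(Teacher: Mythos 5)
There is a genuine gap, and it sits right at your reduction step. From $\h(M)\le\h(J)\le\lambda(R/J)$ for a surjective image $J$ of $M$, you conclude that ``it suffices to exhibit one ideal $J$ with $\lambda(R/J)\le\lambda(R/\cC\omega_R)$'' and then take $J=\cC\omega_R$. But the exhibited ideal must actually be a surjective image of $M$ (or of some surjective image of $M$), and an arbitrary module of nonzero rank has no reason to surject onto $\cC\omega_R$. Proving $\h(\cC\omega_R)=\lambda(R/\cC\omega_R)$ would be a statement about the single module $\cC\omega_R$ and would say nothing about a general $M$; the quantifier in the proposition forces you to start from a partial trace ideal $J$ \emph{of $M$ itself} and bound $\lambda(R/J)$. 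Worse, the ``crux'' you propose is false: by \Cref{IUlrich}, $\cC$ is $I$-Ulrich for every ideal $I$, so $\cC\omega_R\cong\cC$ and $\h(\cC\omega_R)=\h(\cC)=\lambda(R/\cC)$, while $\cC\omega_R\subseteq\m\cC\subsetneq\cC$ (Nakayama) whenever $\omega_R$ is a proper ideal, so $\h(\cC\omega_R)<\lambda(R/\cC\omega_R)$ and $\cC\omega_R$ does not realize itself. Concretely, condition $(b)$ of \Cref{mainpropo} fails for $\cC\omega_R$: $R:_Q(\cC\omega_R)=(R:_Q\cC):_Q\omega_R\supseteq\overline{R}:_Q\omega_R=z^{-1}\overline{R}\not\subseteq\overline{R}$ for a minimal reduction $z\in\m$ of $\omega_R$. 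So the colon computation you flagged as the main obstacle cannot go through. (Your closing remark also misquotes \Cref{prop.lem3}, which gives $\lambda(\omega_R/\cC\omega_R)=\lambda(\overline{R}/R)$, not $\lambda(\cC/\cC\omega_R)=\lambda(\overline{R}/R)$.)

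The argument runs in the opposite direction. Let $J$ realize $M$. By \Cref{rem3.1}, $J$ realizes itself, i.e.\ condition $(a)$ of \Cref{mainpropo} holds for $J$; since $\overline{R}$ is a DVR, all three conditions there are equivalent, so $(a)$ forces $(c)$: $\cC\omega_R\subseteq J\omega_R\subseteq J$. Hence
\[
\h(M)=\lambda(R/J)\le\lambda(R/\cC\omega_R)=\lambda(R/\cC)+\lambda\!\left(\cC/\cC\omega_R\right)
\]
by additivity of length along $\cC\omega_R\subseteq\cC\subseteq R$. The DVR hypothesis is used precisely to obtain the implication $(a)\Rightarrow(c)$ for the realizing ideal, not to verify $(c)$ for $\cC\omega_R$.
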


\begin{proof}
	The canonical module exists and can be identified with an ideal $\omega_R$ of $R$ \cite[Corollary 1.7]{MR1296598}. Let $J$ realize $\ds M$. Then by \Cref{rem3.1}, $J$ realizes itself. So, by \Cref{mainpropo}, we have $\ds \cC\omega_R\subseteq J\omega_R$. Now combining all this data we get,
	\begin{align*}
	\h(M)&=\lambda\({R}/{J}\)
	=\lambda\({R}/{J\omega_R}\)-\lambda\({J}/{J\omega_R}\)
	=\lambda\({R}/{\cC\omega_R}\)-\lambda\({J\omega_R}/{\cC\omega_R}\)-\lambda\({J}/{J\omega_R}\)\\
	&=\lambda\({R}/{\cC}\)+\lambda\({\cC}/{\cC\omega_R}\)-\lambda\({J}/{\cC\omega_R}\)
	\end{align*}
	This finishes the proof.
\end{proof}
\begin{corollary}\label{prop.thm.cor3}
	Let $\ds (R,\m,k)$ be an analytically unramified one dimensional Gorenstein local domain with integral closure $\overline{R}$ and fraction field $K$. Further assume that $\overline{R}$ is a $DVR$. Then $$\h(M)\leq \lambda\(\frac{R}{\cC}\)$$ for any $R$-module $M$ which has non-zero rank.
\end{corollary}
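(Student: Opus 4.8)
The plan is to derive this directly from \Cref{prop.cor2} by specializing to the Gorenstein case. Recall that $R$ is Gorenstein if and only if $R$ itself is a canonical module, so we may take $\omega_R = R$ (as an ideal of $R$). With this choice, $\cC\omega_R = \cC R = \cC$, and the bound in \Cref{prop.cor2} reads
\[
\h(M) \leq \lambda\!\(\frac{R}{\cC\omega_R}\) = \lambda\!\(\frac{R}{\cC}\),
\]
which is exactly the desired inequality. Equivalently, one can observe that in the Gorenstein case $\lambda(\cC/\cC\omega_R) = \lambda(\cC/\cC) = 0$, so the two summands in the statement of \Cref{prop.cor2} collapse to $\lambda(R/\cC)$ alone.

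The only point requiring a word of care is that \Cref{prop.cor2} and \Cref{mainpropo} are stated for a fixed but arbitrary identification of the canonical module with an ideal, and the conclusion $\cC\omega_R \subseteq J\omega_R$ (which is condition $(c)$ of \Cref{mainpropo}) is noted there to be independent of that choice. Thus we are free to use the identification $\omega_R = R$ throughout, and all the length computations in the proof of \Cref{prop.cor2} go through verbatim with $\omega_R$ replaced by $R$. The hypotheses of \Cref{prop.cor2} — analytically unramified, one dimensional, $\overline{R}$ a DVR, and the existence of a canonical module — are all inherited, the last being automatic since $R$ itself serves as $\omega_R$.

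I do not anticipate any genuine obstacle here; the corollary is a one-line specialization. The only thing to be mildly careful about is not to conflate "Gorenstein" with "$\cC = R$" — a Gorenstein ring need not be integrally closed, so $\lambda(R/\cC)$ can certainly be positive, and the bound is nontrivial. (When $R$ is a DVR the bound reads $\h(M) \leq 0$, consistent with every module over a DVR being free of rank equal to its rank, hence realized by $R$ itself.) So the writeup is simply: take $\omega_R = R$, invoke \Cref{prop.cor2}, and note $\cC\omega_R = \cC$.
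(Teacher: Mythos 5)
Your proposal is correct and is essentially identical to the paper's own proof: choose $\omega_R=R$ (possible since $R$ is Gorenstein) and apply \Cref{prop.cor2}, noting $\cC\omega_R=\cC$. No further comment is needed.
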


\begin{proof}
	Since $R$ is Gorenstein, $\omega_R$ exists and can be chosen to be $R$ itself. The proof now follows immediately from \Cref{prop.cor2}.
\end{proof}

We can also link $\ds \h(\cdot)$ with another invariant of the ring as the following proposition shows.

\begin{proposition}\label{prop.cor3}
	Let $\ds (R,\m,k)$ be a one dimensional analytically unramified local domain with integral closure $\overline{R}$ and fraction field $K$. Further assume that  $\overline{R}$ is a $DVR$. Then $$\ds \h(M)\leq \h(\omega_R)+\lambda\(\frac{\overline{R}}{R}\)$$ for any $R$-module $M$ that has non-zero rank.
\end{proposition}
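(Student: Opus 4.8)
The plan is to combine the bound from \Cref{prop.cor2} with a comparison between $\h(\omega_R)$ and $\lambda(R/\cC)$. First I would invoke \Cref{prop.cor2}, which gives $\h(M)\leq \lambda(R/\cC\omega_R)=\lambda(R/\cC)+\lambda(\cC/\cC\omega_R)$ for any $R$-module $M$ of non-zero rank, using that $\overline{R}$ is a DVR. So it suffices to show that $\lambda(R/\cC)+\lambda(\cC/\cC\omega_R)\leq \h(\omega_R)+\lambda(\overline{R}/R)$.

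Next I would compute $\h(\omega_R)$ from below. Since $\omega_R$ is torsion-free (being an ideal), it realizes itself up to isomorphism, and by \Cref{mainpropo} any ideal $J$ realizing $\omega_R$ satisfies $\cC\omega_R\subseteq J\omega_R$; moreover $J\cong \omega_R$. A cleaner route: apply \Cref{prop.cor2} in the reverse direction is not available, so instead I would directly estimate $\h(\omega_R)=\lambda(R/J)$ where $J\cong\omega_R$ realizes $\omega_R$. Writing the chain $\cC\omega_R\subseteq J\omega_R\subseteq R$ and using additivity of length together with $J\omega_R\cong\omega_R$ (isomorphic to $J$ composed with multiplication, hence $\lambda(J\omega_R/\cC\omega_R)$ relates to $\lambda(\omega_R/\cC\omega_R)$), I would get $\h(\omega_R)\geq \lambda(R/\cC\omega_R)-\lambda(\omega_R/\cC\omega_R)$. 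By \Cref{prop.lem3}, $\lambda(\omega_R/\cC\omega_R)=\lambda(\overline{R}/R)$, so $\h(\omega_R)\geq \lambda(R/\cC\omega_R)-\lambda(\overline{R}/R)=\lambda(R/\cC)+\lambda(\cC/\cC\omega_R)-\lambda(\overline{R}/R)$. Rearranging yields exactly $\lambda(R/\cC)+\lambda(\cC/\cC\omega_R)\leq \h(\omega_R)+\lambda(\overline{R}/R)$, which combined with the first step finishes the proof.

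The main subtlety, and the step I would be most careful about, is the inequality $\h(\omega_R)\geq \lambda(R/\cC\omega_R)-\lambda(\overline{R}/R)$: one must verify that for the realizing ideal $J\cong\omega_R$ one has $\lambda(J\omega_R/\cC\omega_R)\leq\lambda(\omega_R/\cC\omega_R)$. Since $J\cong\omega_R$ means $J=\alpha\omega_R$ for some $\alpha\in Q$, multiplication by $\alpha$ is an $R$-module isomorphism $\omega_R\to J$ carrying $\cC\omega_R^2$ to... — here one should instead note $J\omega_R=\alpha\omega_R^2$ and $\cC\omega_R$, and that $\alpha\cC\omega_R^2\subseteq J\omega_R$, giving $\lambda(J\omega_R/\alpha\cC\omega_R^2)=\lambda(\omega_R^2/\cC\omega_R^2)$; one then needs $\cC\omega_R\subseteq J\omega_R$ (from \Cref{mainpropo}) to sandwich lengths appropriately. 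Alternatively — and this is probably the slickest approach — just apply \Cref{prop.cor2} with $M=\omega_R$ is circular, so instead I would observe that $J$ realizes $\omega_R$ forces $\lambda(R/J)=\h(\omega_R)$ and run the exact same length-chasing computation as in the proof of \Cref{prop.cor2} but keeping the term $\lambda(J/\cC\omega_R)$ and bounding it by $\lambda(\omega_R/\cC\omega_R)=\lambda(\overline{R}/R)$ via \Cref{prop.lem3}, since $J\cong\omega_R$ and $\cC\omega_R\subseteq J$ is not automatic but $\cC\omega_R\subseteq J\omega_R$ is — so one works with $\lambda(J\omega_R/\cC\omega_R)$ throughout. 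I expect reconciling these length bookkeeping identities to be the only real obstacle; everything else is a direct citation.
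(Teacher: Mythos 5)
Your overall strategy --- reduce to \Cref{prop.cor2} and then account for the difference between $\lambda(R/\cC\omega_R)$ and $\h(\omega_R)+\lambda(\overline{R}/R)$ via \Cref{prop.lem3} --- is the same as the paper's, but the pivotal intermediate step is asserted in a form that is false. You claim $\h(\omega_R)\geq \lambda(R/\cC\omega_R)-\lambda(\omega_R/\cC\omega_R)$. Since $\cC\omega_R\subseteq\omega_R\subseteq R$, the right-hand side is simply $\lambda(R/\omega_R)$ for whichever embedded copy $\omega_R\subseteq R$ you fixed, so the claim reads $\h(\omega_R)\geq\lambda(R/\omega_R)$. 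That is the reverse of what is true: by \Cref{definv}, $\h(\omega_R)$ is the \emph{minimum} of $\lambda(R/J)$ over ideals $J\cong\omega_R$, so $\h(\omega_R)\leq\lambda(R/\omega_R)$ for every embedding, and the discrepancy can be arbitrarily large (replace $\omega_R$ by $\alpha\omega_R$ with $\alpha\in\m$: the left side is unchanged while $\lambda(R/\cC\omega_R)$ grows). The auxiliary bound you propose to justify it, $\lambda(J/\cC\omega_R)\leq\lambda(\omega_R/\cC\omega_R)$ for a realizing ideal $J$ and an unrelated embedding $\omega_R$, fails for the same reason: \Cref{prop.lem3} gives $\lambda(J/\cC J)=\lambda(\overline{R}/R)$, not $\lambda(J/\cC\omega_R)=\lambda(\overline{R}/R)$, and the two differ unless the embedding is chosen compatibly.

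The repair is one sentence, and it is exactly what the paper does: the inequality $\h(M)\leq\lambda(R/\cC\omega_R)$ from \Cref{prop.cor2} holds for \emph{every} identification of the canonical module with an ideal of $R$, so choose that identification to be an ideal realizing the canonical module, i.e.\ one with $\lambda(R/\omega_R)=\h(\omega_R)$ (such a copy exists because the canonical module is torsion-free of rank one, so its surjective images onto ideals are exactly its isomorphic copies in $R$). Then $\lambda(R/\cC\omega_R)=\lambda(R/\omega_R)+\lambda(\omega_R/\cC\omega_R)=\h(\omega_R)+\lambda(\overline{R}/R)$ by additivity of length and \Cref{prop.lem3}, and you are done; no lower bound on $\h(\omega_R)$ is needed anywhere. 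The paper phrases this by rerunning the computation of \Cref{prop.cor2} to obtain $\h(M)=\lambda(R/\omega_R)+\lambda(\overline{R}/R)-\lambda(J/\cC\omega_R)\leq\lambda(R/\omega_R)+\lambda(\overline{R}/R)$ and then minimizing $\lambda(R/\omega_R)$ over all identifications.
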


\begin{proof}
	Let $J$ realize $M$ and identify $\omega_R$ with some ideal of $R$. We proceed exactly as in \Cref{prop.cor2}.
	\begin{align*}
	\h(M)&=\lambda\(R/J\)
	=\lambda\(R/J\omega_R\)-\lambda\(J/J\omega_R\)
	=\lambda\(R/\cC\omega_R\)-\lambda\(J\omega_R/\cC\omega_R\)-\lambda\(J/J\omega_R\)\\
	&=\lambda\(R/\omega_R\)+\lambda\(\overline{R}/R\)-\lambda\(J/\cC\omega_R\)
	\end{align*} where in the last line we used \Cref{prop.lem3}. Since this is satisfied by all possible identifications of $\omega_R$ inside $R$, 
	the proof is complete by \Cref{definv}.
\end{proof}

\subsection{The Invariant $\h(\omega_R)$}\label{GrecoSec}

Notice that $\ds \h(\omega_R)=0$ if and only if $R$ is Gorenstein. Thus it is natural to ask if $\ds \h(\omega_R)$ can be used as a tool in classification problems. 

The following question was asked by S. Greco in a conversation with C. Huneke.
\begin{question}[S. Greco]
	Suppose $(R,\m,k)$ is a complete one dimensional local domain with canonical module $\omega_R$. Is the following assertion true?
	$$\min\{\lambda(R/J)\mid J\cong \omega_R, J\subseteq R\}\leq \lambda(R/\cC)+e(R)-\lambda(\overline{R}/R).$$
\end{question}
Note that it is asking to check the bound on $\h(\omega_R)$ where $\omega_R$ is a canonical module. We show that the statement is false and provide the correct inequality. 


\begin{theorem}\label{Greco}
	Let $(R,\m,k)$ be a one dimensional analytically unramified local domain with infinite residue field $k$, and canonical module $\omega_R$. Then the following statements hold.
	\begin{enumerate}
		\item $\h(\omega_R)=0$ if and only if $R$ is Gorenstein. 
		\item If $R$ is not Gorenstein, then $$e(J;R)-\mu(\omega_R)+1\geq \h(\omega_R)\geq \lambda(R/\cC)+e(R)-\lambda(\overline{R}/R)$$ where $J$ is any ideal that realizes the canonical module.
	\end{enumerate}
\end{theorem}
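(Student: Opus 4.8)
The plan is to prove the two parts separately: part (1) by a short splitting argument, and part (2) by evaluating a single length, $\lambda(R/\cC J)$, in two different ways for a fixed ideal $J$ that realizes $\omega_R$, together with one standard multiplicity estimate.

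For part (1), I would argue as follows. If $\h(\omega_R)=0$ then there is an ideal $J\subseteq R$ with $\lambda(R/J)=0$, i.e. $J=R$, and a surjection $\omega_R\onto R$; since $R$ is free this splits, so $\omega_R\cong R\oplus\tau(\omega_R)$, and as $\omega_R$ has rank one and is torsion-free (being an ideal of a domain) we get $\tau(\omega_R)=0$ and $\omega_R\cong R$, i.e. $R$ is Gorenstein. Conversely, if $R$ is Gorenstein then $\omega_R\cong R$ and the identity map gives $\h(\omega_R)=\lambda(R/R)=0$. At this point I would also record the facts used throughout part (2): when $R$ is not Gorenstein, $\omega_R$ is not principal, so any ideal $J\cong\omega_R$ is a proper ideal with $\lambda(R/J)=\h(\omega_R)<\infty$; hence $J$ is $\m$-primary and $J\subseteq\m$, and $\cC$ is a nonzero (so MCM) ideal.

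For the lower bound in part (2), fix $J$ realizing $\omega_R$, so $J\cong\omega_R$ and $\h(\omega_R)=\lambda(R/J)$, and I would compute $\lambda(R/\cC J)$ twice. On one hand, since $\cC J\subseteq J$ and $J=\alpha\omega_R$ for some $\alpha\in Q$ we have $J/\cC J\cong\omega_R/\cC\omega_R$, and \Cref{prop.lem3} gives $\lambda(\omega_R/\cC\omega_R)=\lambda(\overline{R}/R)$, so $\lambda(R/\cC J)=\lambda(R/J)+\lambda(J/\cC J)=\h(\omega_R)+\lambda(\overline{R}/R)$. On the other hand, since $\cC J\subseteq\cC$ and $\cC$ is $J$-Ulrich (as in the proof of \Cref{IUlrich}, via \cite[Cor.~4.10]{dao2021reflexive}), we have $\lambda(\cC/\cC J)=e(J;\cC)$; from $0\to\cC\to R\to R/\cC\to 0$ with $\dim R/\cC=0$ one gets $e(J;\cC)=e(J;R)$, and $e(J;R)\geq e(\m;R)=e(R)$ because $J\subseteq\m$, so $\lambda(R/\cC J)=\lambda(R/\cC)+e(J;\cC)\geq\lambda(R/\cC)+e(R)$. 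Equating the two expressions for $\lambda(R/\cC J)$ gives $\h(\omega_R)\geq\lambda(R/\cC)+e(R)-\lambda(\overline{R}/R)$. For the upper bound, since $k$ is infinite and $\dim R=1$ the $\m$-primary ideal $J$ has a principal minimal reduction $(x)$, and $x\notin\m J$ (else a power of $J$ would vanish by Nakayama), so $x$ is a minimal generator of $J$; then $e(J;R)=e((x);R)=\lambda(R/xR)=\lambda(R/J)+\lambda(J/xR)$ and $\lambda(J/xR)\geq\mu(J/xR)=\mu(J)-1=\mu(\omega_R)-1$, which rearranges to $\h(\omega_R)=\lambda(R/J)\leq e(J;R)-\mu(\omega_R)+1$.

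The step I expect to require the most care is the two-way evaluation of $\lambda(R/\cC J)$ in the lower bound — specifically, invoking the $J$-Ulrich property of the conductor to identify $\lambda(\cC/\cC J)$ with $e(J;\cC)$ and then collapsing $e(J;\cC)$ to $e(R)$; the rest (additivity of length, the behaviour of lengths under the isomorphism $J\cong\omega_R$, and the classical bound $\lambda(R/xR)\geq\lambda(R/J)+\mu(J)-1$ for a principal reduction $(x)\subseteq J$) is routine.
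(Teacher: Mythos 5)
Your argument is correct, and it splits into two halves of different character relative to the paper. For the lower bound you are doing essentially what the paper does, just repackaged: the paper writes $\h(\omega_R)=\lambda(R/\cC)+\lambda(\cC/\cC\omega_R)-\lambda(\omega_R/\cC\omega_R)$, identifies $\lambda(\omega_R/\cC\omega_R)=\lambda(\overline{R}/R)$ via \Cref{prop.lem3}, and uses the $\omega_R$-Ulrich property of $\cC$ (through a minimal reduction $z$ and \cite[Corollary 4.7.11]{bruns_herzog_1998}) to turn $\lambda(\cC/\cC\omega_R)$ into $e(z;R)=e(\omega_R;R)\geq e(R)$; your two-way count of $\lambda(R/\cC J)$ uses exactly the same three ingredients, with the minor streamlining that you read off $\lambda(\cC/J\cC)=e(J;\cC)$ directly from the definition of $J$-Ulrich rather than passing through $\cC J=z\cC$. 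For the upper bound, however, you take a genuinely different route: the paper keeps the identity $\h(\omega_R)=\lambda(R/\cC)+e(z;R)-\lambda(\overline{R}/R)$ and imports the inequality $\lambda(\overline{R}/R)-\lambda(R/\cC)\geq\mu(\omega_R)-1$ from \cite[Proposition 12.2.3]{MR2266432}, whereas you avoid the conductor altogether and use the elementary estimate $e(J;R)=\lambda(R/xR)=\lambda(R/J)+\lambda(J/xR)\geq\lambda(R/J)+\mu(J)-1$ for a principal minimal reduction $(x)$ of $J$ (your justification that $x\notin\m J$, hence $\mu(J/xR)=\mu(J)-1$, is sound). Your version is more self-contained; the paper's version has the advantage that the equality case of its cited inequality is literally the almost Gorenstein condition, which is what makes \Cref{almostgorenteinpropo} immediate, while in your setup the equality case reads $\m J\subseteq xR$ and would need a further (known but nontrivial) translation to recover that corollary. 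Part (1), which the paper dismisses as clear, is handled correctly by your splitting argument.
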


\begin{proof}
	Statement $(1)$ is clear. Let $\omega_R$ denote a partial trace ideal of the canonical module. It is a non-principal ideal since $R$ is not Gorenstein. So, we get $$\h(\omega_R)=\lambda(R/\omega_R)=\lambda(R/\cC)+\lambda(\cC/\cC\omega_R)-\lambda(\omega_R/\cC\omega_R).$$ Since $\cC$ is $\omega_R$-Ulrich, by \cite[Proposition 4.5]{dao2021reflexive}, we have $\cC\omega_R=z\cC$ where $z$ is a minimal reduction of $\omega_R$ (exists by \cite[Proposition 8.3.7, Corollary 8.3.9]{MR2266432}).  Thus using \Cref{prop.lem3}, we obtain that
	\begin{equation}\tag{\ref{Greco}.1}\label{primaryeq}\h(\omega_R)=\lambda(R/\cC)+\lambda(\cC/z\cC)-\lambda(\overline{R}/R).\end{equation}
	Since $\cC$ is MCM of rank 1, \cite[Corollary 4.7.11]{bruns_herzog_1998} gives us
	$$\h(\omega_R)=\lambda(R/\cC)+e(z;R)-\lambda(\overline{R}/R).$$ For the upper bound, first notice that $e(z;R)=e(\omega_R;R)$ \cite[Proposition 11.2.1]{MR2266432}. Then using \cite[Proposition 12.2.3]{MR2266432}, we get $$\lambda(R/\cC)-\lambda(\overline{R}/R)\leq 1-\mu(\omega_R).$$ Combining these data, \Cref{primaryeq} immediately yields the required upper bound. 
	
	Further, we have $e(z;R)\geq e(R)$. So, \Cref{primaryeq} immediately gives the lower bound $$\h(\omega_{R})\geq \lambda(R/\cC)+e(R)-\lambda(\overline{R}/R). \qedhere $$
\end{proof}

 Moreover, the following example, which is due to C. Huneke, illustrates that the inequalities in \Cref{Greco} can be strict. It also shows that $\cC\omega_R\subseteq J\omega_R$ does not imply $\cC\subseteq J$ in \Cref{mainpropo}. For more details regarding the computations here, we refer the reader to \cite[Section 5.1]{maitra2020partial}.

\begin{example}[Huneke]\label{strict}
	Let $\ds R=k[[t^5,t^6,t^8]]\cong \frac{k[[x,y,z]]}{(y^3-x^2z,x^2y-z^2,x^4-y^2z)}$ where $k$ is algebraically closed of characteristic $0$. Then $e(R)=5, \overline{R}=k[[t]]$ and $\cC=t^{10}\overline{R}=(x^2,xy,y^2,xz,yz)$. So, $\lambda(R/\cC)=4$ (notice that the missing valuations from $\cC$ which are present in $R$ are $\{0,5,6,8\}$). We also have that $\lambda(\overline{R}/R)=6$ (the valuations that are missing from $R$ but are present in $\overline{R}$ are $\{1,2,3,4,7,9\}$. Thus, $\lambda(R/\cC)+e(R)-\lambda(\overline{R}/R)=4+5-6=3.$
	
	Moreover, we can choose the canonical ideal to be $\omega_R=(t^6,t^8)=(y,z)$. With this choice, notice that $\cC\omega_R\subseteq\omega_R^2$ and hence by \Cref{mainpropo}, we get that $\h(\omega_R)=\lambda(R/\omega_R)=\lambda\(k[[x,y,z]]/(y,z,x^4)\)=4$. Thus, $\h(\omega_R)>\lambda(R/\cC)+e(R)-\lambda(\overline{R}/R)$ in this case. 
	
	Next, notice that $\mu(\omega_R)=2, e(\omega_R;R)=6$. So, $$e(\omega_R;R)-\mu(\omega_R)+1=6-2+1=5>4=\h(\omega_R).$$ 
	
	Finally, notice that $\cC\not\subseteq \omega_R$ even though $\cC\omega_R\subseteq \omega_R^2$. This shows that without the Gorenstein assumption, in general we may not have $\cC$ contained inside a realizing ideal of a module $M$. 
\end{example}

In their work \cite{barucci1997one}, Barucci and Fr\"oberg defined an \textit{almost Gorenstein} ring to be a ring where $\lambda(\overline{R}/R)=\lambda(R/\cC)+\mu(\omega_R)-1$. These classes of rings have been of interest lately: the reader can refer to \cite{barucci1997one}, \cite{goto2015almost}, \cite{herzog2019trace}, \cite{dao2020trace} amongst other sources. This notion immediately yields the following corollary. 
%
%
%
%
%
%
%
%

\begin{corollary}\label{almostgorenteinpropo}
	Let $(R,\m,k)$ be a one dimensional analytically unramified local domain with infinite residue field. Then $R$ is almost Gorenstein (but not Gorenstein) if and only if $$\h(\omega_R)=e(J;R)-\mu(\omega_R)+1$$ for any partial trace ideal $J$ of $\omega_R$. 
\end{corollary}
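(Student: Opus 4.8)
The plan is to derive the corollary directly from the formula established in the proof of \Cref{Greco}, namely equation \eqref{primaryeq},
\[
\h(\omega_R)=\lambda(R/\cC)+\lambda(\cC/z\cC)-\lambda(\overline{R}/R),
\]
together with the identity $\lambda(\cC/z\cC)=e(z;R)=e(J;R)$ (valid since $\cC$ is MCM of rank one, by \cite[Corollary 4.7.11]{bruns_herzog_1998}, and since $e(z;R)=e(\omega_R;R)=e(J;R)$ as all realizing ideals of $\omega_R$ are isomorphic and a minimal reduction computes the multiplicity, \cite[Proposition 11.2.1]{MR2266432}). Substituting this in, we get
\[
\h(\omega_R)=e(J;R)+\lambda(R/\cC)-\lambda(\overline{R}/R).
\]

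First I would dispose of the trivial direction by noting that when $R$ is not Gorenstein, $\omega_R$ is non-principal, so $\mu(\omega_R)\ge 2$ and the equality $\h(\omega_R)=e(J;R)-\mu(\omega_R)+1$ is equivalent, via the displayed formula, to
\[
\lambda(R/\cC)-\lambda(\overline{R}/R)=1-\mu(\omega_R),
\]
i.e.\ to $\lambda(\overline{R}/R)=\lambda(R/\cC)+\mu(\omega_R)-1$, which is precisely the definition of almost Gorenstein. Thus the forward and backward implications are both immediate once the multiplicity rewriting of \eqref{primaryeq} is in place; no further argument is needed beyond quoting \Cref{Greco} and the definition from \cite{barucci1997one}. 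I would also remark that the ``but not Gorenstein'' qualifier is needed exactly because in the Gorenstein case $\omega_R$ can be taken to be $R$, $\h(\omega_R)=0$, and the formula $e(J;R)-\mu(\omega_R)+1$ need not vanish.

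The only mild subtlety — and the step I would be most careful about — is making sure the statement is well-posed: it asserts the equality ``for any partial trace ideal $J$ of $\omega_R$'', so I must check that $e(J;R)$ is independent of the choice of $J$. This follows because any two partial trace ideals of $\omega_R$ are isomorphic (they differ by multiplication by an element of $Q$, as recorded after \Cref{definv}), and isomorphic ideals have the same Hilbert--Samuel multiplicity; concretely, each such $J$ is isomorphic to $\omega_R$, so $e(J;R)=e(\omega_R;R)$. Since \Cref{Greco} already produces the exact formula for $\h(\omega_R)$ with $J$ any realizing ideal, there is no real obstacle here — the corollary is essentially a restatement of \Cref{Greco}$(2)$ in which the two-sided bound collapses to an equality precisely on the almost Gorenstein locus, and I would present it in two or three lines citing \eqref{primaryeq} and the almost Gorenstein definition.
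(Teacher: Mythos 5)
Your argument is correct and is essentially the paper's own proof: both rest on \eqref{primaryeq} together with $\lambda(\cC/z\cC)=e(z;R)=e(J;R)$, which reduces the asserted equality to $\lambda(\overline{R}/R)=\lambda(R/\cC)+\mu(\omega_R)-1$, i.e.\ the definition of almost Gorenstein from \cite{barucci1997one}. One caveat on your well-posedness remark: it is \emph{not} true in general that isomorphic ideals have the same Hilbert--Samuel multiplicity --- in the paper's own example $I_2=xI_1$ in $k[[x,y]]/(x^2+y^3)$ one has $e(I_2;R)=\lambda(R/xR)+e(I_1;R)>e(I_1;R)$ --- so that particular justification does not stand. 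The independence of $e(J;R)$ from the choice of partial trace ideal is nonetheless true: either note that two partial trace ideals are isomorphic \emph{and} have the same colength, so the correction terms coming from the scaling factor cancel, or more simply read off $e(J;R)=\h(\omega_R)-\lambda(R/\cC)+\lambda(\overline{R}/R)$ from \eqref{primaryeq}, which holds for every ideal $J$ realizing $\omega_R$.
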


\begin{proof}Let $J$ be a partial trace ideal of the canonical module. By  \Cref{Greco}, we have $\h(\omega_R)\leq e(J;R)-\mu(\omega_R)+1$. The same proof also shows that equality is achieved if and only if $\lambda(\overline{R}/R)-\lambda(R/\cC)=\mu(\omega_R)-1$ if and only if $R$ is almost Gorenstein.
\end{proof}

%

\begin{proposition}\label{intclos} Let $(R,\m,k)$ be a one dimensional analytically unramified non-Gorenstein local domain with infinite residue field $k$ and assume that $\overline{R}$ is a DVR. Then the integral closure of some canonical ideal (i.e., an isomorphic copy of the canonical module in $R$) is the maximal ideal $\m$ if and only if $$\h(\omega_R)=\lambda(R/\cC)+e(R)-\lambda(\overline{R}/{R}).$$ 
\end{proposition}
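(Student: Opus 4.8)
The plan is to start from the master formula established in the proof of \Cref{Greco}, namely
\begin{equation*}
\h(\omega_R)=\lambda(R/\cC)+e(z;R)-\lambda(\overline{R}/R),
\end{equation*}
where $z$ is a minimal reduction of a partial trace ideal $\omega_R$ of the canonical module, and where $e(z;R)=e(\omega_R;R)$ by \cite[Proposition 11.2.1]{MR2266432}. Since $R$ is not Gorenstein, $\h(\omega_R)=\lambda(R/\cC)+e(R)-\lambda(\overline{R}/R)$ holds if and only if $e(\omega_R;R)=e(R)$. So the entire statement reduces to proving: \emph{the integral closure of some canonical ideal equals $\m$ if and only if $e(\omega_R;R)=e(R)$ for that ideal.} I would phrase the whole argument as a translation between ``$\overline{\omega_R}=\m$'' and a numerical equality of multiplicities, using that $\omega_R$ (being $\cC$-Ulrich, or directly MCM of rank one) behaves well under this comparison.

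For the forward direction, suppose $\overline{\omega_R}=\m$. Since multiplicity is insensitive to integral closure, $e(\omega_R;R)=e(\overline{\omega_R};R)=e(\m;R)=e(R)$, and we are done by the displayed formula. The only subtlety here is justifying $e(\omega_R;R)=e(\overline{\omega_R};R)$: this is standard (a reduction of an ideal has the same multiplicity, and $I$ is a reduction of $\overline{I}$), but I would cite it from \cite{MR2266432} to be safe. Plugging into the $\h(\omega_R)$ formula then gives exactly the asserted equality.

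For the converse, suppose $e(\omega_R;R)=e(R)$. Because $\overline{R}$ is a DVR, the normalized valuation $v$ gives $e(I;R)=\min\{v(a):a\in I\}$ for any $\m$-primary ideal $I$ (the multiplicity of an $\m$-primary ideal in a one-dimensional analytically unramified domain with DVR integral closure is the least value it attains). Thus $e(\omega_R;R)=e(R)$ says $\omega_R$ and $\m$ have the same minimal value, say $\ell=e(R)$. Now $\overline{\omega_R}=\{x\in R: v(x)\geq \ell\}\cap R$ (the integral closure of an ideal in this setting is the contraction of the corresponding valuation ideal of $\overline{R}$, cf.\ the argument in \Cref{intclosurepartial}), and likewise $\overline{\m}=\{x\in R:v(x)\geq e(R)\}\cap R$. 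Since $\ell=e(R)$, these two contracted valuation ideals coincide, so $\overline{\omega_R}=\overline{\m}$. Finally $\m$ is integrally closed here (it is the maximal ideal and $v(x)\geq e(R)>0$ already characterizes $\m$ among elements of $R$ when $e(R)$ is the least positive value — more carefully, $\overline{\m}\cap R = \m$ because any element outside $\m$ is a unit with value $0$), hence $\overline{\omega_R}=\m$, as desired. I would make sure to remark that the statement quantifies over \emph{some} canonical ideal: the equality $\h(\omega_R)=\lambda(R/\cC)+e(R)-\lambda(\overline{R}/R)$ is realized by a partial trace ideal, and partial trace ideals all have the same integral closure by \Cref{intclosurepartial}, so ``some'' is consistent across the biconditional.

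The main obstacle I anticipate is bookkeeping around which copy of $\omega_R$ is in play and making the valuation-theoretic description of integral closures of $\m$-primary ideals precise and properly cited — in particular the identity $e(I;R)=\min_{a\in I}v(a)$ and $\overline{I}=(I\overline{R})\cap R$ when $\overline{R}$ is a DVR. Once those two facts are in hand, the argument is a short computation off the formula \eqref{primaryeq} from \Cref{Greco}; there is no deep new input, only a clean identification of ``$e(\omega_R;R)=e(R)$'' with ``$\overline{\omega_R}=\m$.''
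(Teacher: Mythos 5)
Your reduction of the statement, via the formula \Cref{primaryeq} from the proof of \Cref{Greco}, to the equivalence $e(\omega_R;R)=e(R)\iff\overline{\omega_R}=\m$ is exactly the paper's route. Where the paper disposes of that equivalence in one stroke by citing Rees's theorem \cite[Theorem 11.3.1]{MR2266432} (using that $R$ is formally unmixed and that $\m$ is integrally closed), you prove it by hand with the identities $e(I;R)=\min_{a\in I}v(a)$ and $\overline{I}=I\overline{R}\cap R$, which are legitimate under the DVR hypothesis and make the argument more self-contained; both directions of that equivalence are correct as you state them.

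There is, however, one genuine gap: your handling of the quantifier ``some canonical ideal.'' You appeal to \Cref{intclosurepartial} to say that all \emph{partial trace} ideals of $\omega_R$ have the same integral closure, but the hypothesis of the forward implication only provides an arbitrary isomorphic copy $\omega_R'$ of the canonical module with $\overline{\omega_R'}=\m$, and $\omega_R'$ need not be a partial trace ideal. Isomorphic ideals need not have the same integral closure (nor the same multiplicity): the paper's own example $(x,y)$ versus $(x^2,xy)$ in $k[[x,y]]/(x^2+y^3)$ shows this, since $e(\alpha J;R)=v(\alpha)+e(J;R)$. So from $\overline{\omega_R'}=\m$ you cannot immediately conclude $e(\omega_R;R)=e(R)$ for the partial trace ideal $\omega_R$ that actually computes $\h(\omega_R)$. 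The missing step, which the paper supplies, is the chain $\omega_R'\subseteq\tr_R(\omega_R')=\tr_R(\omega_R)=(R:_Q\omega_R)\omega_R\subseteq\omega_R\overline{R}\cap R=\overline{\omega_R}$ (using $R:_Q\omega_R\subseteq\overline{R}$ for a partial trace ideal, as in \Cref{intclosurepartial}); this yields $\m=\overline{\omega_R'}\subseteq\overline{\omega_R}\subseteq\m$, hence $\overline{\omega_R}=\m$, and your argument then closes. It is a two-line fix, but without it the forward direction is only established when the witnessing canonical ideal happens to be a partial trace ideal.
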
 

\begin{proof}
	Let $\omega_R$ denote an ideal that realizes the canonical module, and has $z$ as a minimal reduction. Again, from the proof of \Cref{Greco}, we get that $\h(\omega_R)=e(z;R)+\lambda(R/\cC)-\lambda(\overline{R}/R)$. Thus the equality as in the statement holds if and only if $e(z;R)=e(R)$. Since $R$ is formally unmixed, the equality holds if and only if $\overline{\omega_R}=\m$ by \cite[Theorem 11.3.1]{MR2266432}.
	
	Now if we start with any canonical ideal $\omega_R'$, then $\omega_R'\cong \omega_R$. So, ${\omega_R'}\subseteq \tr_R(\omega_R')=\tr_R(\omega_R) \subseteq\overline{\omega_R}$ as in the proof of \Cref{intclosurepartial}. If $\overline{\omega_R'}=\m$, then the same holds for $\omega_R$ as well and we can repeat the first part of the proof.
\end{proof}

\begin{remark}
	In an unpublished work, J. Sally had shown that $\omega_R\not \cong \m$ in a one dimensional non-regular local domain, where $\omega_R$ exists. Indeed if such an isomorphism exists, then  $\omega_R$ is reflexive  and hence $R$ is Gorenstein \cite[Corollary 3.2, Theorem 5.5]{dao2021reflexive}. Thus, $R\cong \m$ which contradicts the non-regularity assumption. 
	

	Thus, $\omega_R\cong \m$ implies that $\h(\omega_R)=0$. So, we conclude that $\h(\omega_R)\geq 2$ if $R$ is non-Gorenstein. Moreover, since $\mu(\omega_{R})\geq 2$, we get $\h(\omega_R)\le e(J;R)-1$ for any partial trace ideal of $\omega_{R}$. 
\end{remark}

In \cite{herzog2021tiny}, J. Herzog, S. Kumashiro and D. Stamate introduced the notion of \textit{far-flung Gorenstein rings}; more precisely, these are one dimensional local domains where the trace ideal of the canonical module is the conductor $\cC$. Although they defined it for only \textit{numerical semi-group} rings, the following results hold more generally under the condition $\tr_R(\omega_{R})=\cC$. 

\begin{theorem}\label{ffGcase}
	Let $(R,\m,k)$ be a one dimensional analytically unramified non-Gorenstein local domain with infinite residue field $k$ and assume that $\overline{R}$ is a DVR. Let $\omega_R$ denote the canonical module of $R$. Then $\tr_R(\omega_R)=\cC$ if and only if $\h(\omega_R)=2\lambda(R/\cC)$. 
\end{theorem}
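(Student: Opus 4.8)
The plan is to work with the key formula \eqref{primaryeq} from the proof of \Cref{Greco}, namely that if $\omega_R$ is chosen to be a partial trace ideal of the canonical module (so that $J := \omega_R$ realizes the canonical module), then
\[
\h(\omega_R) = \lambda(R/\cC) + \lambda(\cC/z\cC) - \lambda(\overline{R}/R),
\]
where $z$ is a minimal reduction of $\omega_R$ and $\cC\omega_R = z\cC$ (using that $\cC$ is $\omega_R$-Ulrich, via \cite[Proposition 4.5]{dao2021reflexive}). By \cite[Corollary 4.7.11]{bruns_herzog_1998} applied to the MCM rank-one module $\cC$, we have $\lambda(\cC/z\cC) = e(z;R)$. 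So the statement $\h(\omega_R) = 2\lambda(R/\cC)$ is equivalent to $e(z;R) = \lambda(R/\cC) + \lambda(\overline{R}/R)$.

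On the other side, I would compute $\lambda(R/\tr_R(\omega_R))$ in these same terms. Since $\tr_R(\omega_R) = (R:_Q\omega_R)\omega_R$ \cite[Proposition 2.4]{kobayashi2019rings} and, by the equivalence in \Cref{mainpropo} (valid since $\overline R$ is a DVR), the realizing ideal $\omega_R$ satisfies $\cC\omega_R \subseteq \omega_R \cdot \omega_R$, one can track $\tr_R(\omega_R)$ through the chain $\cC \subseteq \tr_R(\omega_R) \subseteq \overline{\omega_R}$ as in the proof of \Cref{intclosurepartial}. The cleaner route is: because $\omega_R \cong \tr_R(\omega_R)$ as $R$-modules (both are partial trace ideals of $\omega_R$, hence isomorphic, and $\tr_R(\omega_R)$ realizes itself by \Cref{mainpropo} since it contains $\cC$ when the condition holds), multiplying by $\omega_R$ and using $\cC\omega_R = z\cC$, I get $\lambda(R/\tr_R(\omega_R)) = \lambda(R/\cC) + e(z;R) - \lambda(\overline{R}/R) = \h(\omega_R)$, which is automatic; the real content is comparing $\tr_R(\omega_R)$ with $\cC$ directly. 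So instead I would argue: $\tr_R(\omega_R) = \cC$ iff $\lambda(R/\tr_R(\omega_R)) = \lambda(R/\cC)$ (since $\cC \subseteq \tr_R(\omega_R)$ always, as $\tr_R(\omega_R)$ is a partial trace ideal and by \Cref{mainpropo} $\cC\omega_R \subseteq \tr_R(\omega_R)\omega_R$ forces $\cC \subseteq \tr_R(\omega_R)$ only in the Gorenstein case — so one must be careful here; in general $\cC \subseteq \tr_R(\omega_R)$ holds because $\cC$ is a trace ideal contained in every trace ideal of a faithful module, or because $\tr_R(\omega_R)$ always contains $\cC$ for the canonical module by \cite[Lemma 3.1]{greco1984postulation}-type reasoning). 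Then use $\lambda(\omega_R/\cC\omega_R) = \lambda(\overline{R}/R)$ (\Cref{prop.lem3}) and $\cC\omega_R = z\cC$ to reduce the equality $\tr_R(\omega_R) = \cC$ to $e(z;R) = \lambda(R/\cC) + \lambda(\overline{R}/R)$, matching the condition derived from $\h(\omega_R) = 2\lambda(R/\cC)$.

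Concretely, the forward direction: assume $\tr_R(\omega_R) = \cC$. Then $\tr_R(\omega_R)\omega_R = \cC\omega_R = z\cC$, but also $\tr_R(\omega_R)\omega_R = (R:_Q\omega_R)\omega_R^2$; I would instead compute $\lambda(R/\cC) = \lambda(R/\tr_R(\omega_R))= \lambda(R/\tr_R(\omega_R)\cdot\omega_R) - \lambda(\tr_R(\omega_R)/\tr_R(\omega_R)\omega_R)$, and since $\tr_R(\omega_R) \cong \omega_R$ the second term equals $\lambda(\omega_R/\cC\omega_R) = \lambda(\overline R/R)$ after identifying $\tr_R(\omega_R)\omega_R$ with $\cC\omega_R$ up to the isomorphism; combined with the first-term computation $= \lambda(R/\cC) + \lambda(\cC/\cC\omega_R) = \lambda(R/\cC) + \lambda(\overline R/R) $ wait — I need $\tr_R(\omega_R)\omega_R$, which when $\tr_R(\omega_R)=\cC$ is $\cC\omega_R$, so $\lambda(R/\cC\omega_R) = \lambda(R/\cC) + \lambda(\overline R/R)$, giving $\lambda(R/\cC) = \lambda(R/\cC) + \lambda(\overline R / R) - \lambda(\overline R/R)$, a tautology — so this needs the multiplicity input to have content. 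The honest plan: start from \eqref{primaryeq}, so $\h(\omega_R) = 2\lambda(R/\cC)$ iff $\lambda(\cC/z\cC) = \lambda(R/\cC) + \lambda(\overline R/R)$, i.e. iff $\lambda(R/z\cC) = 2\lambda(R/\cC) + \lambda(\overline R/R)$. Meanwhile $\tr_R(\omega_R) = (R:_Q\omega_R)\omega_R \supseteq \cC$, and since $\overline R$ is a DVR and $\omega_R$ realizes itself, one shows $\tr_R(\omega_R)\omega_R = \cC\omega_R = z\cC$ (the first equality because $(R:_Q\omega_R)\omega_R \cdot \omega_R = (R:_Q\omega_R)\cC\omega_R\cdot(\omega_R/\cC)$... ) — cleaner: $\tr_R(\omega_R)\cdot\omega_R = \tr_R(\omega_R^2)$-type identity plus $\omega_R$ being its own canonical; since $\cC$ is $\omega_R$-Ulrich, $\cC = \tr_R(\omega_R)$ iff $\cC\omega_R = \tr_R(\omega_R)\omega_R$ iff (applying $\lambda(R/-)$ and using $\lambda(\tr_R(\omega_R)/\tr_R(\omega_R)\omega_R) = \lambda(\omega_R/\cC\omega_R) = \lambda(\overline R/R)$) we get $\lambda(R/\cC) + \lambda(\overline R/R) = \lambda(R/\tr_R(\omega_R)) + \lambda(\overline R/R) = \h(\omega_R) + \lambda(\overline R/R)$, hence $\tr_R(\omega_R) = \cC$ iff $\lambda(R/\cC) = \h(\omega_R) - $ no. I will sort the bookkeeping so that the chain of equivalences lands exactly on $\h(\omega_R) = 2\lambda(R/\cC)$; the main obstacle is precisely this careful length bookkeeping around the isomorphism $\tr_R(\omega_R)\cong\omega_R$ and keeping track of which submodule of $\omega_R$ the products $\tr_R(\omega_R)\omega_R$ and $\cC\omega_R$ correspond to, together with confirming $\cC \subseteq \tr_R(\omega_R)$ unconditionally (which follows because $\tr_R(\omega_R)$ is a trace ideal and every trace ideal of a module with a nonzero map to $R$ contains... or more safely, from \Cref{mainpropo}(c) with $J = \tr_R(\omega_R)$ and \Cref{prop.lem3}). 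Once the dictionary "$\tr_R(\omega_R) = \cC$" $\Longleftrightarrow$ "$\lambda(R/\tr_R(\omega_R)) = \lambda(R/\cC)$" $\Longleftrightarrow$ "$\h(\omega_R) = 2\lambda(R/\cC)$" is established, the proof concludes.
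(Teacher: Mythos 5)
Your reduction of the right-hand condition is correct: starting from \eqref{primaryeq}, the equality $\h(\omega_R)=2\lambda(R/\cC)$ is equivalent to $e(z;R)=\lambda(R/\cC)+\lambda(\overline{R}/R)=\lambda(\overline{R}/\cC)$, where $z$ is a minimal reduction of a realizing ideal $\omega_R$. But the proposal never builds the bridge from this multiplicity statement to the equality $\tr_R(\omega_R)=\cC$; your last paragraph is an acknowledgement that ``the bookkeeping'' remains to be sorted, and the intermediate claims you float in order to do it are either unjustified or circular. In particular, the assertion $\tr_R(\omega_R)\omega_R=\cC\omega_R$ is not established (the ``$\tr_R(\omega_R^2)$-type identity'' is not an argument), and the claimed equivalence ``$\cC=\tr_R(\omega_R)$ iff $\cC\omega_R=\tr_R(\omega_R)\omega_R$'' would need a cancellation property of $\omega_R$ that fails for non-invertible ideals in general: $I\omega=J\omega$ with $I\subseteq J$ does not force $I=J$, and $\cC$ and $\tr_R(\omega_R)$ are not known to be isomorphic a priori, so the length trick you use for isomorphic ideals does not apply. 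You also correctly sense that $\cC\subseteq\tr_R(\omega_R)$ needs justification; it holds unconditionally and is \cite[Corollary 3.6]{dao2021reflexive}, not a Gorenstein-only fact.

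The missing idea is to do the length count inside the DVR $\overline{R}$ rather than inside $R$. Since $z$ is a minimal reduction of $\omega_R$, one has $\omega_R\overline{R}=z\overline{R}$, and $e(z;R)=\lambda(R/zR)=\lambda(\overline{R}/z\overline{R})$. Moreover \Cref{mainpropo} gives $R:_Q\omega_R\subseteq\overline{R}$ for a realizing ideal, hence the chain
$$\cC\subseteq\tr_R(\omega_R)=(R:_Q\omega_R)\omega_R\subseteq\omega_R\overline{R}=z\overline{R}\subsetneq\overline{R}.$$
If $\h(\omega_R)=2\lambda(R/\cC)$, then $\lambda(\overline{R}/z\overline{R})=e(z;R)=\lambda(\overline{R}/\cC)$, so every inclusion in the chain collapses and $\tr_R(\omega_R)=\cC$. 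Conversely, if $\tr_R(\omega_R)=\cC$, then $\omega_R\subseteq\cC$ gives $\omega_R\overline{R}\subseteq\cC\overline{R}=\cC$, while the chain gives $\cC\subseteq\omega_R\overline{R}$; hence $\cC=z\overline{R}$, so $e(z;R)=\lambda(\overline{R}/\cC)$, which plugged into \eqref{primaryeq} yields $\h(\omega_R)=2\lambda(R/\cC)$. This passage to $\overline{R}$ is exactly the step your proposal circles around without landing on.
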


\begin{proof}
	Let $\omega_{R}$ denote a partial trace ideal of the canonical module and suppose it has minimal reduction $z$. From \Cref{primaryeq}, we get $\h(\omega_R)=\lambda(R/\cC)+e(z;R)-\lambda(\overline{R}/R)$. We also know that $e(z;R)=\lambda(R/zR)=\lambda(\overline{R}/z\overline{R})$ \cite[Lemma 2.4]{maitra2020partial}. 
	
	Assume that $\tr_R(\omega_{R})=\cC$. Then $\omega_R\subseteq \tr_R(\omega_R)=\cC$, and hence, $\omega_R\overline{R}\subseteq \cC\overline{R}=\cC$. Since $\omega_R$ realizes the canonical module, we have $\cC=\tr_R(\omega_R)=(R:_Q\omega_R)\omega_R\subseteq\omega_R\overline{R}$ by \Cref{mainpropo}. Hence, we get that $\cC=\omega_R\overline{R}$. Now recall that $\omega_R\overline{R}=z\overline{R}$ and hence combining the above observations together, we get $\h(\omega_R)=\lambda(R/\cC)+\lambda(\overline{R}/\cC)-\lambda(\overline{R}/R)=2\lambda(R/\cC)$. This finishes one direction of the proof.
	
	For the converse, we assume that $\h(\omega_R)=2\lambda(R/\cC)$. By \Cref{primaryeq}, we immediately obtain that $e(z;R)=\lambda(\overline{R}/z\overline{R})=\lambda(\overline{R}/\cC)$. Now observe the following inclusions: $\cC\subseteq \tr_R(\omega_R)\subseteq \omega_R\overline{R}=z\overline{R}\subsetneq \overline{R}$ (see \cite[Corollary 3.6]{dao2021reflexive} for the first inclusion). Hence, $\lambda(\overline{R}/\cC)=\lambda(\overline{R}/z\overline{R})+\lambda(z\overline{R}/\tr_R(\omega_R))+\lambda(\tr_R(\omega_R)/\cC)$. Since $\lambda(\overline{R}/\cC)=\lambda(\overline{R}/z\overline{R})$, we conclude that $\cC=\tr_R(\omega_R)$. 
\end{proof}

\begin{corollary}
	Let $(R,\m,k)$ be a one dimensional analytically unramified non-Gorenstein local domain with infinite residue field $k$ and let $\omega_R$ denote the canonical module of $R$. Assume that $\overline{R}$ is a DVR  and  that $\tr_R(\omega_R)=\cC$. Then \begin{align*}
		\h(\omega_R^n)=\begin{cases}
		2\lambda(R/\cC), & n=1\\
		\lambda(R/\cC) & n\geq 2.
		\end{cases}
	\end{align*} 
\end{corollary}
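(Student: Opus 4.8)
The plan is to dispose of the case $n=1$ by \Cref{ffGcase} and, for $n\ge 2$, to prove that the powers of the canonical ideal collapse onto the conductor: $\omega_R^{\,n}\cong\cC$ for every $n\ge 2$. Since $\cC=\tr_R(\overline R)$ is a trace ideal it realizes itself (so $\h(\cC)=\lambda(R/\cC)$), and $\h$ depends only on the isomorphism class of its argument; hence $\h(\omega_R^{\,n})=\h(\cC)=\lambda(R/\cC)$ will follow at once.

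I would begin by reproducing the set-up of the proof of \Cref{ffGcase}: let $\omega_R$ be a partial trace ideal of the canonical module and, using that $\overline R$ is a DVR, pick a minimal reduction $z\in\omega_R$ with $\cC=z\overline R$. As in that proof, $\omega_R\subseteq\tr_R(\omega_R)=\cC$, so $R:_Q\omega_R\supseteq R:_Q\cC=\overline R$, while $R:_Q\omega_R\subseteq\overline R$ because $\omega_R$ realizes itself; hence $R:_Q\omega_R=\overline R$, and therefore $\omega_R\overline R=(R:_Q\omega_R)\,\omega_R=\tr_R(\omega_R)=\cC$. In particular $\omega_R\subseteq z\overline R$ and $\cC\omega_R=z(\overline R\,\omega_R)=z\cC=z^2\overline R$.

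The crux is the identity $\omega_R^{\,2}=z^2\overline R$. One inclusion is clear: $\omega_R^{\,2}\subseteq(z\overline R)^2=z^2\overline R$. For the reverse, note first that $z\in\omega_R$ and $\cC\subseteq R$ give $z^2\overline R=z\cC\subseteq zR\subseteq\omega_R$; together with $\overline R\,\omega_R^{\,2}=\cC\omega_R=z^2\overline R$ this shows both that $\omega_R^{\,2}\subseteq\omega_R:_Q\overline R$ (the largest $\overline R$-submodule of $\omega_R$) and that $z^2\overline R$ is an $\overline R$-submodule of $\omega_R$, hence $z^2\overline R\subseteq\omega_R:_Q\overline R$. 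Now I would bring in the canonical duality $D=\Hom_R(-,\omega_R)$, which is an exact involution on maximal Cohen--Macaulay $R$-modules (all the ideals here, and $\overline R$, are MCM over the one-dimensional CM domain $R$), together with the standard fact $\End_R(\omega_R)=\omega_R:_Q\omega_R=R$, valid for a CM domain with canonical module. On rank-one fractional ideals $D$ acts by $D(I)=\omega_R:_Q I$, so $D(\omega_R^{\,2})=\{q: q\,\omega_R\subseteq\omega_R:_Q\omega_R=R\}=R:_Q\omega_R=\overline R$; applying $D$ once more, biduality yields $\omega_R^{\,2}=D(\overline R)=\omega_R:_Q\overline R$ (the inclusion $\omega_R^{\,2}\hookrightarrow\omega_R:_Q\overline R$ noted above is exactly the biduality map, so it is onto). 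Combining, $z^2\overline R\subseteq\omega_R^{\,2}\subseteq z^2\overline R$, i.e. $\omega_R^{\,2}=z^2\overline R\cong\cC$.

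Granting this, the remaining cases are an immediate induction: if $\omega_R^{\,n}=z^n\overline R$ then $\omega_R^{\,n+1}=\omega_R\cdot z^n\overline R=z^n(\omega_R\overline R)=z^n\cC=z^{n+1}\overline R$, so $\omega_R^{\,n}=z^n\overline R\cong z\overline R=\cC$ for all $n\ge 2$, giving $\h(\omega_R^{\,n})=\lambda(R/\cC)$. The step I expect to require the most care is the identity $\omega_R^{\,2}=z^2\overline R$: one has to make sure that $\End_R(\omega_R)=R$ and the $\Hom_R(-,\omega_R)$-duality are available in the stated generality (they are, since $R$ is a one-dimensional CM local domain with canonical module), and that the biduality isomorphism, after the fractional-ideal identifications, is literally the inclusion $\omega_R^{\,2}\hookrightarrow\omega_R:_Q\overline R$ so that it forces equality; the rest is bookkeeping with colon ideals and the valuation of $\overline R$.
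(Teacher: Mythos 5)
Your proof is correct, and its skeleton matches the paper's: the case $n=1$ is \Cref{ffGcase}, and for $n\ge 2$ everything reduces to showing $\omega_R^2\cong\cC$. The difference lies in how the central identity is obtained. The paper notes that $\omega_R\subseteq\tr_R(\omega_R)=\cC$ gives $\omega_R^2\subseteq\cC\omega_R$, while the reverse containment $\cC\omega_R\subseteq\omega_R^2$ is precisely statement $(c)$ of \Cref{mainpropo} applied to $J=\omega_R$ (which realizes itself, and $\overline{R}$ is a DVR so $(a)\Rightarrow(c)$); hence $\omega_R^2=\cC\omega_R\cong\cC$ by the $\omega_R$-Ulrich property of $\cC$, and \Cref{condpower} finishes. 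You instead prove the reverse containment by canonical duality: identifying $\Hom_R(-,\omega_R)$ with colon ideals, using $\omega_R:_Q\omega_R=R$ and $\omega$-reflexivity of MCM fractional ideals to conclude $\omega_R^2=\omega_R:_Q\overline{R}\supseteq z^2\overline{R}=\cC\omega_R$. This is valid --- $\End_R(\omega_R)=R$, biduality for MCM modules, and the identification of the biduality map with the inclusion all hold in this setting --- but it is an avoidable detour: you already invoke \Cref{mainpropo} to get $R:_Q\omega_R\subseteq\overline{R}$, i.e.\ statement $(b)$, and $(b)\iff(c)$ hands you $\cC\omega_R\subseteq\omega_R\cdot\omega_R$ with no duality needed. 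On the other hand, your concluding induction $\omega_R^{n+1}=\omega_R\cdot z^n\overline{R}=z^n\cC=z^{n+1}\overline{R}$ is a clean, self-contained replacement for the paper's appeal to the Ulrich property together with \Cref{condpower}, and has the small virtue of exhibiting the isomorphism $\omega_R^n\cong\cC$ explicitly as multiplication by $z^{1-n}$.
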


\begin{proof}
	The case $n=1$ follows from \Cref{ffGcase}. Now fix $\omega_R$ to be a partial trace ideal of the canonical module. By assumption, $\omega_R\subseteq \tr_R(\omega_R)= \cC$ and hence $\omega_R^2\subseteq \cC\omega_R$. Moreover, we have $\cC\omega_R\subseteq \omega_R^2$ from \Cref{mainpropo}. Thus, we get that $\cC\omega_R=\omega_R^2$. Finally using $\omega_R$-Ulrich property of $\cC$ \cite[Corollary 4.10]{dao2021reflexive}, we get that $\omega_R^n\cong \cC$ for all $n\geq 2$. Now use \Cref{condpower}.
\end{proof}

\section{Conclusion}
We conclude this article with the following observation and question.
Note that if $I$ is isomorphic to a trace ideal $J$, then $J=\tr_R(I)$ is the  unique partial trace ideal of $I$. To see this, suppose $I\cong J=\tr_R(J)$. So, $\tr_R(I)=J$. Also, $\h(I)=\lambda(R/J)$ by \cite[Proposition 3.4]{maitra2020partial}. Now  let $I'$ be any partial trace of $I$. Then $I'\cong J$ and hence $\tr_R(I')=J$. Also, by assumption, $\h(I)=\lambda(R/I')=\lambda(R/J)=\lambda(R/\tr_R(I'))$. Thus, $I'=\tr_R(I')=J$.  This leads us to the following natural question. 

\begin{question}
	For an ideal $I$ which is not isomorphic to a trace ideal, how many partial trace ideals can exist? 
\end{question}

It is also not clear to the author if there exists a general algorithm to obtain a partial trace ideal given any ideal $I$ and as such coming up with examples is not very straightforward. Given an isomorphic copy of $I$, \Cref{mainpropo}(c) is the best possible verification procedure so far (see \Cref{strict} for an illustration).

\end{document}